\newcommand{\PreserveBackslash}[1]{\let\temp=\\#1\let\\=\temp}
\newcolumntype{C}[1]{>{\PreserveBackslash\centering}p{#1}}
\newcolumntype{R}[1]{>{\PreserveBackslash\raggedleft}p{#1}}
\newcolumntype{L}[1]{>{\PreserveBackslash\raggedright}p{#1}}
\def\wbar{\accentset{{\cc@style\underline{\mskip8mu}}}}
\theoremstyle{plain}
\newtheorem{theorem}{Theorem}
\newtheorem{lemma}{Lemma}
\newtheorem{remark}{Remark}[section]
\begin{document}

\title{A new proof of the Lemmens--Seidel conjecture}
\author{Chuanyuan Ge\footnotemark[1]\and Shiping Liu\footnotemark[2] }

\footnotetext[1]{School of Mathematics and Statistics, Fuzhou University, Fuzhou 350108, China.\\
Email addresses:
{\tt cyge@fzu.edu.cn} }
\footnotetext[2]{School of Mathematical Sciences, 
University of Science and Technology of China, Hefei 230026, China. \\
Email addresses:
{\tt spliu@ustc.edu.cn}
}
\date{}\maketitle
\begin{abstract}
In this paper, we give a new proof of the Lemmens-Seidel conjecture on the maximum number of equiangular lines with a common angle $\arccos(1/5)$. This conjecture was previously resolved by Cao, Koolen, Lin, and Yu in 2022 through an analysis involving forbidden subgraphs for the smallest Seidel eigenvalue $-5$. Our new proof is based on
bounds on eigenvalue multiplicities of graphs with degree no larger than $14$. To control the maximum degree of the graph associated with equiangular lines, we employ a recent inequality of Balla derived by matrix projection techniques.
Our strategy also leads to a new proof for the classical result obtained by Lemmens and Seidel in 1973 for the case where the common angle is $\arccos(1/3)$.


\end{abstract}

\section{Introduction}
 A set of lines in $\mathbb{R}^d$ passing through the origin is called equiangular if every two lines have a common angle $\arccos (\alpha)$. The study of equiangular lines and their variants is closely related to various research topics, see \cite{Balla-Draxler-Keevash-Sudakov-18,Cao-Koolen-Lin-Yu,balla2021equiangular,balla2024equiangular-exponential-regime,Jiang-Tidor-Yao-Zhang-Zhao-21} and the references therein.

Let $N(d)$ be the maximum number of equiangular lines in $\mathbb{R}^d$. An important problem raised by van Lint and Seidel \cite{van-Seidel-elliptic-geometry} is to determine $N(d)$ for any $d\geq 1$. Gerzon \cite{Lemmens-Seidel-73} showed that $N(d)\leq \frac{d(d+1)}{2}$, which is later proved to be tight for $d=2,3,7$ and $23$. It is still open whether Gerzon's bound is tight for infinitely many $d$ or not. In \cite{de-00}, de Caen constructed an equiangular $\frac{2(d+1)^2}{9}$-set in $\mathbb{R}^d$ for $d=3\cdot2^{t-1}-1, t\geq 1$. This implies that $N(d)\geq cd^2$ for any $d\geq 1$, where $c$ is an absolute constant. For more constructions, the readers are referred to \cite{Greaves-Koolen-Munemasa-16,Barg-Yu-14,Jedwab-Wiebe-15}.

For $\alpha\in (0,1)$, we denote by $N_{\alpha}(d)$ the maximum number of equiangular lines in $\mathbb{R}^d$ with a common angle $\arccos(\alpha)$. Lemmens and Seidel  \cite{Lemmens-Seidel-73} derived the following bound for $N(d)$:
\begin{equation}\label{ineq:N(d)}
    N(d)\leq \max\left\{N_{\frac{1}{3}}(d),N_{\frac{1}{5}}(d),\ldots,N_{\frac{1}{2m-1}}(d),\frac{4dm(m+1)}{(2m+1)^2-d}\right\},\ \ \text {for any integer}\ m>\frac{\sqrt{d}-1}{2}.
\end{equation}
This naturally motivates the investigation of $N_{\alpha}(d)$. In particular, Lemmens and Seidel \cite{Lemmens-Seidel-73} completely determined the values of $N_{\frac{1}{3}}(d)$ for any $d\geq 3$. They proved $N_{\frac{1}{3}}(d)=4,6,10,16$ for $d=3,4,5,6$, respectively, and the following theorem.
\begin{theorem}[{\cite[Theorem 4.5]{Lemmens-Seidel-73}}]\label{thm:main2}
    We have $N_{\frac{1}{3}}(d)=28$ for any $7\leq d\leq 15$ and $N_{\frac{1}{3}}(d)=2d-2$ for any $d\geq 15$.
\end{theorem}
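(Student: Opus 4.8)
The plan is to transport to the angle $\arccos(1/3)$ the scheme that the introduction describes for $\arccos(1/5)$: translate the problem into a spectral statement about a graph, bound that graph's maximum degree by an absolute constant using Balla's matrix--projection inequality, and then rule out the large configurations via an eigenvalue--multiplicity estimate for graphs of bounded degree. Only the upper bounds need proof; the matching lower bounds $N_{\frac13}(d)\ge 2d-2$ for every $d\ge 3$ and $N_{\frac13}(d)\ge 28$ for $7\le d\le 15$ are classical, realized respectively by the ``$d-1$ size--two pillars'' configuration (associated graph $(d-1)K_2$, or a connected crown graph in another switching) and by the regular two--graph on $28$ points arising from the root system $E_7$.

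\emph{Spectral reduction.} Let $L$ be a set of $n$ equiangular lines in $\R^d$ at angle $\arccos(1/3)$ and pick unit spanning vectors $v_1,\dots,v_n$. Their Gram matrix $M$ is positive semidefinite with $\mathrm{rank}\,M\le d$, and $3M=3I+S$ for a Seidel matrix $S$, so $\lambda_{\min}(S)\ge -3$ and the eigenvalue $-3$ has multiplicity $m_S(-3)\ge n-d$. Fix a graph $\Gamma$ on $n$ vertices in the switching class of $S$, with adjacency matrix $A$ and $S=J-I-2A$. On $\mathbf 1^{\perp}$ one has $Sx=(-I-2A)x$, so every $x\in\ker(S+3I)\cap\mathbf 1^{\perp}$ satisfies $Ax=x$; hence $1$ is an eigenvalue of $A$ of multiplicity at least $m_S(-3)-1\ge n-d-1$, and positivity $3M=J+2I-2A\succeq 0$ forces $\lambda_2(A)\le 1$. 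Equivalently, the complement $\overline\Gamma$ has $\lambda_{\min}(\overline\Gamma)\ge -2$ with the eigenvalue $-2$ of multiplicity at least $n-d-2$.

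\emph{Degree bound, structure, and conclusion.} Apply Balla's matrix--projection inequality to $L$: for the angle $\arccos(1/3)$ it yields an absolute constant $\Delta_0$ such that, after a suitable switching, $\Gamma$ has maximum degree $\le\Delta_0$ (the companion result for $\arccos(1/5)$ uses the bound $14$). Then $\overline\Gamma$ has minimum degree $\ge n-1-\Delta_0$ --- it is \emph{dense} --- and still carries $-2$ with large multiplicity. Two inputs finish the argument: (i) the Cameron--Goethals--Seidel--Shult theorem, by which a graph with smallest eigenvalue $\ge -2$ is a generalized line graph or one of finitely many exceptional graphs (all on at most $36$ vertices, represented inside $E_8$); and (ii) bounds on eigenvalue multiplicities for graphs of maximum degree $\le\Delta_0$, which, together with the density of $\overline\Gamma$, force the multiplicity of $-2$ to be small unless $\overline\Gamma$ is a cocktail--party graph $K_{m\times 2}$ (i.e.\ $\Gamma=mK_2$ up to switching), a bounded perturbation of one, or one of the exceptional graphs. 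In the cocktail--party case, $m_S(-3)=m-1$ together with $m_S(-3)\ge n-d=2m-d$ gives $n\le 2d-2$; in the exceptional (or bounded--perturbation) case $n$ is absolutely bounded, and imposing $m_{\overline\Gamma}(-2)\ge n-d-2$ narrows the list to $n\le 28$. Thus $n>\max\{28,\,2d-2\}$ is impossible, giving $N_{\frac13}(d)\le 28$ for $7\le d\le 15$ and $N_{\frac13}(d)\le 2d-2$ for $d\ge 15$.

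\emph{Main obstacle.} The crux is the sharpness of the two quantitative inputs. One must take the constant $\Delta_0$ out of Balla's inequality and the multiplicity estimate for bounded--degree graphs precisely strong enough that together they cut off at exactly $n=2d-2$ once $d\ge 15$ (so that $n=2d-1$ is genuinely ruled out) while still leaving room for the value $28$ throughout $7\le d\le 15$. Equivalently, the hard part is the endgame right at the threshold $d=15$, where the exceptional configurations attached to $E_7$ and $E_8$ and the crossover between the ``$28$'' and the ``$2d-2$'' regimes have to be pinned down with no slack.
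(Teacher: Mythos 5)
Your opening moves coincide with the paper's: pass to the Gram matrix, deduce $\lambda_2(A)\le 1$ and that the eigenvalue $1$ of $A$ has multiplicity at least $n-d-1$, and control the maximum degree by Balla's projection inequality (for $\alpha=\tfrac13$ Balla's bound gives $\Delta_G\le 3$ once the spanning vectors are chosen, as in the paper's Lemma \ref{lemma:nonnegative}, so that the top eigenfunction of the Gram matrix is nonnegative). The endgame, however, has a genuine gap, in two respects. First, the decisive claim --- that bounded degree of $\Gamma$ together with density of $\overline\Gamma$ ``forces the multiplicity of $-2$ to be small unless $\overline\Gamma$ is a cocktail-party graph, a bounded perturbation of one, or one of the exceptional graphs'' --- is precisely the statement that needs an argument, and you give none; the ``bounded perturbation'' and exceptional cases are not actually handled, and your own closing paragraph concedes that the crossover at $d=15$ is not pinned down. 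Second, the tool you reach for (the Cameron--Goethals--Seidel--Shult classification of graphs with least eigenvalue $\ge -2$) is exactly the classification machinery this paper is written to avoid; invoking it would reproduce the older style of proof rather than yield a new one.

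What is missing is a concrete, quantitative multiplicity bound that closes the count without any classification. The paper proves (Lemma \ref{multi2}): if $G$ is connected with $\lambda_2(A_G)=1$ and $\Delta_G\le 3$, then the eigenvalue $1$ has multiplicity at most $8$. The proof is elementary: the path $P_3$ has spectral radius $>1$, its closed neighborhood contains at most $8$ vertices when $\Delta_G\le 3$, the edge-disjointness lemma (Lemma \ref{lemma:edgedisjoint}) forces the graph induced on the complement of that neighborhood to have spectral radius $<1$, and Cauchy interlacing bounds the multiplicity by $8$. Combined with $n\le d+1+m_1(A_G)$ from Lemma \ref{lemma:secondeigenvalue} and a short argument that only one connected component of $G$ can have $\lambda_1>1$ (so the connected-graph lemma controls the full multiplicity), this gives $n\le d+9\le 2d-2$ for all $d\ge 11$ --- with slack, so no delicate analysis at the threshold $d=15$ and no $E_7$/$E_8$ exceptional configurations enter anywhere. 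Replacing your CGSS step with such a bound turns your outline into the paper's proof.
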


Combining the inequality \eqref{ineq:N(d)}, Theorem \ref{thm:main2}, and related constructions (see \cite[Section 2]{Lemmens-Seidel-73}), Lemmens and Seidel proved that $N(d)=28$ for any $7\leq d\leq 13$, $N(15)=36$, $N(21)=126$, $N(22)=176$ and $N(23)=276$. Furthermore, Lemmens and Seidel \cite{Lemmens-Seidel-73} proposed a conjecture concerning $N_{\frac{1}{5}}(d)$, which was recently solved by Cao, Koolen, Lin, and Yu \cite{Cao-Koolen-Lin-Yu}.  
\begin{theorem}[\cite{Cao-Koolen-Lin-Yu}]\label{thm:main}
    We have $N_{\frac{1}{5}}(d)=276$ for $23\leq d\leq 185$ and  $N_{\frac{1}{5}}(d)=\left\lfloor\frac{3d-3}{2}\right\rfloor$ for $d\geq 185$. 
\end{theorem}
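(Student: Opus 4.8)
The plan is to reformulate the line system as a spectral condition on a graph, to invoke Balla's projection inequality to force that graph to have bounded maximum degree, and then to apply a multiplicity bound for the eigenvalue $2$ of bounded-degree graphs; the exceptional plateau value $276$ and the cut-off $d=185$ should then fall out of the arithmetic once the degree bound $14$ is in hand.

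First, I would pass from lines to a graph in the standard way. Given $n$ equiangular lines in $\R^d$ at angle $\arccos(1/5)$, pick unit vectors $v_1,\dots,v_n$ along them; the Gram matrix is $I+\tfrac15 S$ for a Seidel matrix $S$, and positive semidefiniteness together with $\operatorname{rank}\le d$ give $S+5I\succeq 0$ and $\operatorname{rank}(S+5I)\le d$. Flipping signs $v_i\mapsto-v_i$ conjugates $S$ by a $\pm1$ diagonal matrix, so, replacing $S$ by a switching-equivalent matrix, I would write $S=J-I-2A$ with $A$ the adjacency matrix of a graph $\Gamma$ on $n$ vertices (switchings of the line system becoming Seidel switchings of $\Gamma$). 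From $S+5I=J+4I-2A$ one reads off $A\preceq 2I+\tfrac12 J$, so every eigenvalue of $A$ on $\mathbf 1^{\perp}$ is at most $2$; writing $m=\operatorname{mult}_2(A)$ and analysing the rank-one perturbation $J=\mathbf 1\mathbf 1^{\top}$ of $4I-2A$ shows $\operatorname{mult}_{-5}(S)\in\{m-1,m,m+1\}$, equal to $m-1$ precisely when $\mathbf 1$ is not orthogonal to the $2$-eigenspace of $A$ (in particular when $\Gamma$ is $2$-regular). Since $\operatorname{mult}_{-5}(S)\ge n-d$, everything reduces to bounding $m$ by essentially $n/3$.

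Second — and this is the step I expect to be the main obstacle — I would use Balla's recent matrix-projection inequality to show that, when $n\ge 277$ (the range in which the linear formula is the operative bound), the switching class of $\Gamma$ contains a representative with maximum degree at most $14$. Roughly, a vertex of large degree would put a line in the ``wrong'' relative position with respect to too many others, and projecting onto a suitable subspace would contradict the inequality; making this quantitative uniformly over all vertices, and re-switching to absorb the residual obstruction, is where the real work lies. The delicate point is extracting a genuinely dimension-free ceiling on the degree of some representative and understanding the configurations sitting exactly at the threshold, which are presumably the source of the exceptional value $276$.

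Finally, with $\Delta(\Gamma)\le14$ and $m\ge n-d-1$, I would invoke the bounds on the multiplicity of the eigenvalue $2$ for graphs of maximum degree at most $14$: passing to connected components, any component carrying $2$ in its spectrum has spectral radius $\ge2$, equal to $2$ only for cycles and the affine Dynkin graphs $\widetilde D_k,\widetilde E_6,\widetilde E_7,\widetilde E_8$ — each with at least three vertices and multiplicity exactly $1$ — while larger spectral radius makes the $2$-eigenspace proportionally smaller; hence $m\le\lfloor n/3\rfloor$, with near-equality forcing $\Gamma$ to be essentially a disjoint union of triangles and hence essentially $2$-regular, so that the sharper bound $\operatorname{mult}_{-5}(S)=m-1$ from the first step applies. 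Combined with $\operatorname{mult}_{-5}(S)\ge n-d$, this gives $n\le\lfloor(3d-3)/2\rfloor$ whenever $n\ge277$; since $\lfloor(3\cdot185-3)/2\rfloor=276$, we conclude $N_{1/5}(d)\le276$ for $23\le d\le185$ and $N_{1/5}(d)\le\lfloor(3d-3)/2\rfloor$ for $d\ge185$. The matching lower bounds come from the classical configuration of $276$ equiangular lines in $\R^{23}$, which embeds in $\R^d$ for all $d\ge23$, and from the disjoint-triangle construction attaining $\lfloor(3d-3)/2\rfloor$ for $d\ge185$. Running the same scheme with $(-5,2)$ replaced by $(-3,1)$ and the degree ceiling adjusted accordingly reproves Theorem~\ref{thm:main2}.
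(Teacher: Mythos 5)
Your overall architecture (lines $\to$ graph with $2I-A+\tfrac12 J\succeq 0$ of rank $\le d$, Balla's projection inequality to force bounded degree, then a multiplicity bound for the eigenvalue $2$) is the same as the paper's, and your lower-bound constructions (Conway's $276$ lines and disjoint triangles) match Lemma \ref{lemma:lowerbound}. But there is a genuine gap at the step you treat as routine: the bound $m=\operatorname{mult}_2(A)\le\lfloor n/3\rfloor$. Your justification via Smith's classification (cycles and affine Dynkin diagrams, each with at least three vertices and $\operatorname{mult}_2=1$) only covers components whose spectral radius is \emph{exactly} $2$. In the regime $n>\lfloor\frac{3d-3}{2}\rfloor$ one necessarily has $\lambda_1(A_G)>2$ (paper's Lemma \ref{lemma:secondeigenvalue}, Step 2), and the positive semidefiniteness of $2I-A_G+\tfrac12 J$ forces \emph{every other} component to have spectral radius strictly below $2$ (the $h=f-cg$ argument in the paper's proof). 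So the entire $2$-eigenspace lives inside a single connected component $G_1$ with $\lambda_1(A_{G_1})>2$ and $\lambda_2(A_{G_1})=2$, where Smith's theorem says nothing and your phrase ``larger spectral radius makes the $2$-eigenspace proportionally smaller'' has no proof behind it. This is exactly the hard point: the paper devotes Lemma \ref{lemma:multi} to it, combining the degree bound $\Delta_G\le 14$ with the edge-disjoint interlacing lemma (Lemma \ref{lemma:edgedisjoint}) and the cyclomatic/nodal-domain bound (Lemma \ref{lemma:multi tree}) to get the \emph{absolute} bound $m_2(A_{G_1})\le 80$; no $n/3$-type bound or tightness/stability analysis (``near-equality forces disjoint triangles'') is needed, since $n\le d+81\le\lfloor\frac{3d-3}{2}\rfloor$ already holds for all $d\ge 185$.

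Two smaller points. First, you explicitly defer the degree-bound step (``where the real work lies''); the paper carries it out in Lemma \ref{lemma:maximum-degree}, and the specific ceiling $14$ (rather than Balla's general $\Delta_G<17$ for $\alpha=\tfrac15$) requires a genuine refinement using $\lambda_1(M_{\mathscr C})>12$ (from Lemma \ref{lemma:uniform-upper} with $n\ge 276$) and a nonnegative top eigenvector (from Lemma \ref{lemma:nonnegative}), followed by a two-case estimate in $\lambda$. Second, even granting $m\le n/3$, your accounting yields only $n\le\frac{3(d+1)}{2}$ unless the sharper $\operatorname{mult}_{-5}(S)=m-1$ case applies, which you would still have to establish; the paper's rank/nullity bookkeeping ($n\le d+1+m_2(A_G)$) avoids this issue entirely.
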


For general $\alpha$, Bukh \cite{Bukh-16} proved that $N_{\alpha}(d)\leq C_{\alpha}d$ for any $d\geq 1$, where $C_{\alpha}$ is a constant depending only on $\alpha$. Balla, Dr\"axler, Keevash, and Sudakov \cite{Balla-Draxler-Keevash-Sudakov-18} showed that $\limsup_{d\to \infty}\frac{N_{\alpha}(d)}{d}$ is less than $1.93$ for $\alpha\in (0,1)\setminus\{\frac{1}{3}\}$ while $\lim_{d\to \infty}\frac{N_{\alpha}(d)}{d}=2$ for $\alpha=\frac{1}{3}$. Jiang, Tidor, Yao, Zhang, and
 Zhao \cite{Jiang-Tidor-Yao-Zhang-Zhao-21} completely determined $\lim_{d\to \infty}\frac{N_{\alpha}(d)}{d}$ for all $\alpha\in (0,1)$ in terms of the so-called spectral radius order, which is a quantity introduced by Jiang and Polyanskii \cite{Jiang-Polyanskii-20}. In particular, their result tells that $\lim_{d\to\infty}\frac{N_{1/(2k-1)}(d)}{d}=\frac{k}{k-1}$ for any integer $k\geq 2$. In addition to the study of asymptotics of $N_{\alpha}(d)$, research on improving the upper bounds of $N_{\alpha}(d)$ has also been actively pursued, see \cite{balla2021equiangular,yu2017new}.

The proof of Theorem \ref{thm:main} by Cao, Koolen, Lin, and Yu \cite{Cao-Koolen-Lin-Yu} is based on an analysis involving forbidden subgraphs for the smallest Seidel eigenvalue $-5$.
In this paper, we present a new proof of Theorem \ref{thm:main}. The key steps are as follows. First, we prove an estimate for the multiplicity of eigenvalues of graphs whose second largest eigenvalue equals $2$ and whose maximum degree does not exceed $14$ (see Lemma \ref{lemma:multi}).
This estimate is based on a lemma from the authors' previous work \cite[Lemma 3]{ge-liu-2024equiangular}, which itself draws on the notion of discrete nodal domains and the upper bounds for their number obtained by Lin, Lippner, Mangoubi, and Yau \cite{lin2010nodalmulti}.
Secondly, we construct a spherical $\{-\alpha, \alpha\}$-code by appropriately selecting vectors spanning the equiangular lines, such that the associated graph satisfies all requirements of the multiplicity estimate lemma. The main challenge here is controlling the upper bound of the maximum degree, which we overcome by applying \citep[Theorem 2.1]{balla2021equiangular}, a result of Balla derived using matrix projection techniques. Balla \cite{balla2021equiangular} obtained a general bound on the maximum degree for equiangular lines with any common angle via this theorem; in our case, we refine this bound specifically for the angle $\arccos(1/5)$. This refinement is necessary for the proof of Theorem \ref{thm:main}, as discussed in Remark \ref{re:Delta}.

Compared with the previous proof \cite{Cao-Koolen-Lin-Yu}, a benefit of our approach is  that it does not rely on the fact that the number of minimal graphs with spectral radius greater than $2$ is finite.
It is known that for $k\geq 4$, there are infinitely many minimal graphs with spectral radius greater than $k-1$ \cite[Theorem 1]{Jiang-Polyanskii-20}. Therefore, our approach might be useful in studying $N_{1/(2k-1)}(d)$ for $k\geq 4$.


For $d\leq 7$, the value of  $N_{\frac{1}{3}}(d)$ follows directly from the relative bound, see \cite[Theorem 4.5]{Lemmens-Seidel-73}. However, it is more sophisticated to determine $N_{\frac{1}{3}}(d)$ for $d\geq 8$. In this case, our approach also works. We use our method to give a new proof of Theorem \ref{thm:main2}.

\section{Preliminaries}
\subsection{Notations and definitions}
 Let $G=(V,E)$ be a graph. For any $u,v\in V$, we use $u\sim v$ to denote $\{u,v\}\in E$. For any $u\in V$, we denote by $\mathrm{deg}_G(u)$ the degree of $u$ in $G$. Let $\Delta_G$ be the maximum degree of $G$. For any subset $V_0\subset V$, let $N_G(V_0)$ be the neighborhood of $V_0$ in $G$, that is, $N_G(V_0):=\{u\in V:\exists\  v\in V_0 \text{such that }u\sim v\}$. Define $\overline{N}_G(V_0):=N_G(V_0)\cup V_0$. We denote by $G[V_0]$ the induced subgraph of $V_0$ in $G$.
 

We list the eigenvalues of the adjacency matrix $A_G$ of $G$ as follows
\[\lambda_1(A_G)\geq \lambda_2(A_G)\geq \cdots\geq \lambda_n(A_G),\]
where $n$ is the size of the graph. 
 We denote the adjacency matrix of $G$ by $A_G$. We write $m_{\lambda}(A_G)$ for the multiplicity of an adjacency eigenvalue $\lambda$.

 Let $I_n$ be the $n\times n$ identity matrix, and $J_n$ be the $n\times n$ all-$1$ matrix. We use $\mathbf{1}_n$ to denote the $n$-dimensional all $1$ vector. When the dimension is obvious from context, we drop the subscript and simply write $I,J, \mathbf{1}$ instead of $I_n,J_n,\mathbf{1}_n$.

 A spherical $\{-\alpha,\alpha\}$-code $\mathscr{C}$ in $\mathbb{R}^d$ is a set of unit vectors in $\mathbb{R}^d$ where the inner product between any two distinct vectors is either $\alpha$ or $-\alpha$. For a collection of vectors $\mathscr{C} = \{v_1, v_2, \dots, v_n\}$, the Gram matrix $M_{\mathscr{C}} = \{m_{ij}\}_{1 \leq i,j \leq n}$ is defined by $m_{ij} = \langle v_i, v_j \rangle$.
We associate a graph $G = (V, E)$ with a spherical $\{-\alpha, \alpha\}$-code $\mathscr{C}$ by setting
\begin{equation}\label{eq:graphs_associated_code}
V = \{v_1, v_2, \dots, v_n\}, \quad
E = \{\{v_i, v_j\} : \langle v_i, v_j \rangle = -\alpha\}.
\end{equation}
\subsection{Connection between equiangular lines and graphs}
Let $\{l_1,l_2,\ldots,l_n\}$ be $n$ equiangular lines in $\mathbb{R}^d$ with a common angle $\arccos(\alpha)$. Pick a set of unit vectors $\mathscr{C}=\{v_1,v_2,\ldots,v_n\}$ such that $v_i$ spans $l_i$. By definition, $\mathscr{C}$ is an $n$-spherical $\{-\alpha,\alpha\}$-code in $\mathbb{R}^d$. Let $G=(V,E)$ be the graph associated with $\mathscr{C}$ defined by \eqref{eq:graphs_associated_code}. We have \begin{equation}\label{eq:M_C}
\frac{M_{\mathscr{C}}}{2\alpha}=\frac{1-\alpha}{2\alpha}I+\frac{1}{2}J-A_G.
\end{equation}
This implies that the matrix $\frac{1-\alpha}{2\alpha}I+\frac{1}{2}J-A_G$ is positive semidefinite with \[\mathrm{rank}\left(\frac{1-\alpha}{2\alpha}I+\frac{1}{2}J-A_G\right)\leq d.\] Conversely, if there exists an $n$-vertex graph $G$ with the matrix $\frac{1-\alpha}{2\alpha}I+\frac{1}{2}J-A_G$ being positive semidefinite and $\mathrm{rank}\left(\frac{1-\alpha}{2\alpha}I+\frac{1}{2}J-A_G\right)\leq d$, then there exist $n$ equiangular lines in $\mathbb{R}^d$ with a common angle $\arccos (\alpha)$. This is due to the fact that any positive semidefinite matrix $P$ has a decomposition $P=C^\top C$. In summary we have the following lemma.
\begin{lemma}\label{lemma:equivalent}
    The following statements are equivalent.
\begin{itemize}
    \item [(1)] There exist $n$ equiangular lines in $\mathbb{R}^d$ with a common angle $\arccos (\alpha)$.
    \item [(2)] There exists an $n$-spherical $\{-\alpha,\alpha\}$-code in $\mathbb{R}^d$.
    \item [(3)] There exists an $n$-vertex graph such that the matrix $\frac{1-\alpha}{2\alpha}I+\frac{1}{2}J-A_G$ is positive semidefinite with $\mathrm{rank}\left(\frac{1-\alpha}{2\alpha}I+\frac{1}{2}J-A_G\right)\leq d$. 
\end{itemize}  
Furthermore, the adjacency matrix $A_G$ of the graph $G$ associated with any $n$-spherical $\{-\alpha,\alpha\}$-code $\mathscr{C}$ in $\mathbb{R}^d$ must satisfy $\mathrm{rank}(\frac{1-\alpha}{2\alpha}I+\frac{1}{2}J-A_G)\leq d$ and $\frac{1-\alpha}{2\alpha}I+\frac{1}{2}J-A_G$ is positive semidefinite. 
\end{lemma}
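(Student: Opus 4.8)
The plan is to verify the cycle of implications by proving the two equivalences (1)$\Leftrightarrow$(2) and (2)$\Leftrightarrow$(3) separately, and to read off the final ``furthermore'' assertion from the forward direction (2)$\Rightarrow$(3). For (1)$\Leftrightarrow$(2): given $n$ equiangular lines $l_1,\dots,l_n$ with common angle $\arccos(\alpha)$, I would pick one unit direction vector $v_i$ on each $l_i$; since any two distinct lines meet at angle $\arccos(\alpha)$, we get $\langle v_i,v_j\rangle\in\{-\alpha,\alpha\}$ for $i\neq j$, i.e.\ an $n$-spherical $\{-\alpha,\alpha\}$-code. Conversely, from such a code $\{v_1,\dots,v_n\}$ the lines $\mathbb{R}v_i$ are pairwise at angle $\arccos(\alpha)$, and they are $n$ distinct lines because $\alpha<1$ forces $v_i\neq\pm v_j$ for $i\neq j$.

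For (2)$\Rightarrow$(3) I would start from a code $\mathscr{C}=\{v_1,\dots,v_n\}\subset\mathbb{R}^d$ with associated graph $G$ as in \eqref{eq:graphs_associated_code}, and check the identity \eqref{eq:M_C} entry by entry: both sides have diagonal entries $\frac{1-\alpha}{2\alpha}+\frac12=\frac{1}{2\alpha}$; for $v_i\sim v_j$ both sides equal $0+\frac12-1=-\frac12$; for $v_i\not\sim v_j$ both sides equal $\frac12$. Since $M_{\mathscr{C}}$ is the Gram matrix of $n$ vectors lying in $\mathbb{R}^d$, it is positive semidefinite of rank at most $d$, and multiplication by the positive constant $\frac{1}{2\alpha}$ preserves both properties, giving (3). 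The same computation, applied to the graph associated with an arbitrary code, is exactly the ``furthermore'' statement.

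For (3)$\Rightarrow$(2), given an $n$-vertex graph $G$ for which $P:=\frac{1-\alpha}{2\alpha}I+\frac12 J-A_G$ is positive semidefinite of rank $\leq d$, I would set $M:=2\alpha P$, which remains positive semidefinite of rank $\leq d$ since $\alpha>0$. Writing $M=C^\top C$ with $C\in\mathbb{R}^{d\times n}$ (possible for any positive semidefinite matrix of rank at most $d$) and letting $v_1,\dots,v_n$ be the columns of $C$, the entrywise description above shows the diagonal of $M$ is all ones, so the $v_i$ are unit vectors, and the off-diagonal entries are $-\alpha$ exactly when $\{v_i,v_j\}\in E$ and $\alpha$ otherwise; hence $\{v_1,\dots,v_n\}$ is an $n$-spherical $\{-\alpha,\alpha\}$-code in $\mathbb{R}^d$, which closes the cycle.

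I do not expect a genuine obstacle here: the argument is bookkeeping around the algebraic identity \eqref{eq:M_C} together with the standard fact that a positive semidefinite matrix of rank at most $d$ is the Gram matrix of $n$ vectors in $\mathbb{R}^d$. The only place to be slightly careful is tracking the normalization factor $2\alpha$ so that the reconstructed vectors come out with norm exactly $1$ and pairwise inner products exactly $\pm\alpha$, rather than some rescaled version.
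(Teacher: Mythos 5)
Your proposal is correct and follows essentially the same route as the paper: pass between lines and unit spanning vectors for (1)$\Leftrightarrow$(2), verify the identity \eqref{eq:M_C} to get (2)$\Rightarrow$(3) and the ``furthermore'' claim, and use the factorization $P=C^\top C$ of a positive semidefinite matrix of rank at most $d$ for the converse. Your explicit entrywise check of \eqref{eq:M_C} and the normalization bookkeeping are just slightly more detailed versions of what the paper states.
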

\subsection{Spectral radius and eigenvalue multiplicity}

Let $G=(V,E)$ be a graph. We say that two subsets $V_1$ and $V_2$ of $V$ are \emph{edge disjoint}, if $V_1\cap V_2=\emptyset$ and the set $\{\{u,v\}:u\in V_1,v\in V_2,u\sim v\}$ is empty. 
\begin{lemma}[{\cite[Lemma 2.2]{balla2024equiangular-exponential-regime}}]\label{lemma:edgedisjoint}
Let $G=(V,E)$ be a connected graph. Let $V_1$ and $V_2$ be two edge disjoint subsets of $V$. Then, we have $\lambda_1(A_{G[V_1]})<\lambda_2(A_G)$ or $\lambda_1(A_{G[V_2]})<\lambda_2(A_G)$ or $\lambda_1(A_{G[V_1]})=\lambda_1(A_{G[V_2]})=\lambda_2(A_G)$.
\end{lemma}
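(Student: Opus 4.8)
The plan is to combine the Courant--Fischer characterisation of $\lambda_2(A_G)$ with the Perron--Frobenius theorem, the latter being the only place where connectedness of $G$ is used. We may assume $V_1$ and $V_2$ are both nonempty (otherwise $\lambda_1(A_{G[V_i]})=-\infty$ for some $i$ and the first alternative holds), which in particular forces $|V|\geq 2$ so that $\lambda_2(A_G)$ is defined. Write $\mu:=\lambda_2(A_G)$ and $\nu_i:=\lambda_1(A_{G[V_i]})$, and suppose we are not in either of the first two cases, i.e.\ $\nu_1\geq\mu$ and $\nu_2\geq\mu$; the goal is then to deduce $\nu_1=\nu_2=\mu$.

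First I would record the consequence of connectedness. Since $G$ is connected, $A_G$ is irreducible, so by Perron--Frobenius $\lambda_1(A_G)$ is a simple eigenvalue admitting a strictly positive eigenvector $u$. Consequently $u^{\perp}$ is the span of eigenvectors of $A_G$ associated with $\lambda_2(A_G)\geq\cdots\geq\lambda_n(A_G)$, and hence $z^{\top}A_G z\leq\mu\,\|z\|^{2}$ for every $z\perp u$.

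Next I would build test vectors from the two induced subgraphs. For $i=1,2$ let $x_i$ be a nonzero nonnegative eigenvector of $A_{G[V_i]}$ for $\nu_i$ (which exists since $A_{G[V_i]}$ is a nonnegative symmetric matrix), viewed as a vector on $V$ supported in $V_i$. Because $V_1\cap V_2=\emptyset$ and $V_1,V_2$ are edge disjoint, we have $x_1^{\top}x_2=0$, $x_1^{\top}A_G x_2=0$, and $x_i^{\top}A_G x_i=x_i^{\top}A_{G[V_i]}x_i=\nu_i\|x_i\|^{2}$; moreover $u^{\top}x_i>0$ because $u>0$ and $0\neq x_i\geq 0$. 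Since $x_1,x_2$ have disjoint nonempty supports they are linearly independent, so $W:=\mathrm{span}\{x_1,x_2\}$ is two-dimensional, and the count $\dim W+\dim u^{\perp}=2+(|V|-1)>|V|$ produces a nonzero $z=ax_1+bx_2\in W\cap u^{\perp}$. From $0=u^{\top}z=a\,u^{\top}x_1+b\,u^{\top}x_2$ together with $u^{\top}x_1,u^{\top}x_2>0$, one sees that $a=0$ would force $b=0$ and vice versa, so both $a\neq 0$ and $b\neq 0$.

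Finally I would play the two estimates on $z$ against each other. Expanding with the orthogonality relations, $z^{\top}A_G z=a^{2}\nu_1\|x_1\|^{2}+b^{2}\nu_2\|x_2\|^{2}$ and $\|z\|^{2}=a^{2}\|x_1\|^{2}+b^{2}\|x_2\|^{2}$, while $z\perp u$ gives $z^{\top}A_G z\leq\mu\|z\|^{2}$, i.e.\ $a^{2}(\nu_1-\mu)\|x_1\|^{2}+b^{2}(\nu_2-\mu)\|x_2\|^{2}\leq 0$. Both summands are nonnegative (since $\nu_i\geq\mu$ and $a,b\neq 0$, $x_i\neq 0$), hence both vanish, forcing $\nu_1=\mu$ and $\nu_2=\mu$, which is exactly the third alternative. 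I expect no serious obstacle here: the only subtlety is to recognise that connectedness is needed solely to make $u$ strictly positive --- this is precisely what prevents $z$ from being supported in a single $V_i$ --- and indeed the statement genuinely fails for disconnected graphs, as one sees by taking $G$ to be a disjoint union of graphs with $V_1,V_2$ two of its components.
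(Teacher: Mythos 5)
Your proof is correct and complete: the paper itself states this lemma without proof (it is quoted from Balla's work), and your argument --- Perron eigenvectors of the two induced subgraphs extended by zero, a linear combination forced into the orthogonal complement of the strictly positive top eigenvector of $A_G$, and the Rayleigh-quotient bound $z^{\top}A_G z\leq\lambda_2(A_G)\|z\|^2$ on that complement --- is exactly the standard argument behind the cited result. You also correctly isolate where connectedness enters (strict positivity of $u$, which forces $a\neq 0$ and $b\neq 0$), so nothing further is needed.
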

The \emph{cyclomatic number} $\ell_G$ of a connected graph $G=(V,E)$ is defined as $\ell_G:=|E|-|V|+1$. The next lemma provides an an estimate for the multiplicity of eigenvalues in relation to the cyclomatic number. Its proof relies on an upper bound for the number of strong nodal domains established by Lin, Lippner, Mangoubi, and Yau \cite{lin2010nodalmulti}, along with a construction of specific eigenfunctions on trees. For details, see 
 \cite[Lemma 3]{ge-liu-2024equiangular}.
\begin{lemma}[{\cite[Lemma 3]{ge-liu-2024equiangular}}]\label{lemma:multi tree}
    For any connected graph $G$, we have $m_{\lambda_k(A_G)}(A_G)\leq (k-1)\Delta_G+\ell_G$.
\end{lemma}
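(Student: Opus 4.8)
Write $\lambda:=\lambda_k(A_G)$ and $m:=m_\lambda(A_G)$; we may assume $k\ge2$ (for a connected graph $\lambda_1$ is a simple Perron eigenvalue, so the inequality is degenerate for $k=1$ and is meant with $k\ge2$) and $m\ge k$ (otherwise $m\le k-1\le(k-1)\Delta_G$ already). The plan is to settle the tree case by a Parter--Wiener interlacing induction and then to recover the cyclomatic correction $\ell_G$ via discrete nodal domains.

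\textbf{Step 1 (trees).} I would show $m_{\lambda_k}(A_T)\le(k-1)\Delta_T$ for every tree $T$ by induction on $|V(T)|$; the case $m_\lambda(A_T)=1$ is immediate. If $m_\lambda(A_T)\ge2$, the Parter--Wiener theorem supplies a vertex $v$ with $2\le\mathrm{deg}_T(v)\le\Delta_T$ and $m_\lambda(A_{T-v})=m_\lambda(A_T)+1$. Cauchy interlacing between $A_T$ and the principal submatrix $A_{T-v}$ pins the smallest index at which $\lambda$ appears in $\mathrm{spec}(A_{T-v})$ to be exactly $k-1$, so $A_{T-v}$ has $k-2$ eigenvalues above $\lambda$. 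Decomposing $A_{T-v}=\bigoplus_{i=1}^{s}A_{T_i}$ over the $s\le\Delta_T$ components of $T-v$ and writing $k_i$ for the index at which $\lambda$ appears in $\mathrm{spec}(A_{T_i})$ (when it does), one gets $\sum_i m_\lambda(A_{T_i})=m_\lambda(A_T)+1$ and $\sum_i(k_i-1)\le k-2$, the sums ranging over components meeting $\lambda$. A component with $k_i=1$ has $\lambda$ as its Perron root, so $m_\lambda(A_{T_i})=1$; a component with $k_i\ge2$ obeys the inductive hypothesis $m_\lambda(A_{T_i})\le(k_i-1)\Delta_{T_i}\le(k_i-1)\Delta_T$. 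Summing, and bounding the number of $k_i=1$ components by $s\le\Delta_T$,
\[
m_\lambda(A_T)+1\ \le\ \Delta_T+\Delta_T\sum_{k_i\ge2}(k_i-1)\ \le\ \Delta_T+(k-2)\Delta_T\ =\ (k-1)\Delta_T ,
\]
which closes the induction.

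\textbf{Step 2 (the cyclomatic correction).} For a general connected $G$ I would pass to discrete nodal domains. Using $\dim\mathcal{E}_\lambda=m$, I would fix $k-1$ vertices $v_1,\dots,v_{k-1}$ and a nonzero eigenfunction $f\in\mathcal{E}_\lambda$ vanishing at all of them, choosing the $v_j$ (the graph-theoretic analogue of iterating Parter vertices) so that every $\lambda$-eigenfunction already vanishes there and so that $\lambda$ becomes the top eigenvalue of each component of $G':=G-\{v_1,\dots,v_{k-1}\}$ on which it survives. Then $f|_{G'}$ is a $\lambda$-eigenfunction of $A_{G'}=\bigoplus_iA_{C_i}$ that, by Perron positivity, is sign-definite on each $C_i$, so the strong nodal domains of $f$ on $G$ are precisely the $C_i$ on which $f$ is nonzero; a dimension count on the injection $\mathcal{E}_\lambda(A_G)\hookrightarrow\mathcal{E}_\lambda(A_{G'})$ shows a generic such $f$ is supported on at least $m$ of the $C_i$, hence has at least $m$ strong nodal domains. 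On the other hand, the Davies--Gladwell--Leydold--Stadler weak nodal-domain bound, refined by the Lin--Lippner--Mangoubi--Yau estimate for strong nodal domains, caps the number of strong nodal domains of a $\lambda_k$-eigenfunction by $(k-1)\Delta_G+\ell_G$: each of the $k-1$ forced zeros splits a weak domain into at most $\Delta_G$ strong pieces, and each of the $\ell_G$ independent cycles adds at most one more. Comparing the two estimates gives $m\le(k-1)\Delta_G+\ell_G$.

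\textbf{Main obstacle.} The interlacing arithmetic and Perron positivity are routine; the substance lies in Step 2, on two counts. First, one must be able to choose the $k-1$ deletion vertices so that $\lambda$ rises to the top of the spectra of the surviving components while keeping the restriction map as large as possible --- Parter vertices exist automatically for trees but need not exist for general graphs, so this requires a real argument (or a controlled loss that can then be absorbed). Second, and I expect this to be the genuine crux, the cyclomatic number must enter the nodal-domain bound \emph{additively}: a naive reduction that simply deletes the $\ell_G$ cycle edges or vertices loses a whole factor of $\Delta_G$, so one truly needs the Lin--Lippner--Mangoubi--Yau input that an eigenfunction's strong nodal domains exceed its weak ones by at most $\ell_G$. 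Finally, the $k=1$ (hence tree) borderline, where the stated inequality is false as written, should be dispatched at the outset.
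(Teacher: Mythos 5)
The paper does not actually reprove this lemma --- it imports it from the authors' earlier work and only describes the proof there as combining the Lin--Lippner--Mangoubi--Yau bound on strong nodal domains with a construction of specific eigenfunctions on trees. Measured against that, your Step 1 is a correct, self-contained proof of the tree case $m_{\lambda_k}(A_T)\le (k-1)\Delta_T$ for $k\ge 2$: the Parter--Wiener vertex, the interlacing bookkeeping showing $A_{T-v}$ has exactly $k-2$ eigenvalues above $\lambda$, the Perron argument for components with $k_i=1$, and the induction all check out, and this is a genuinely different (arguably cleaner) route for trees. You are also right that the inequality fails as written for $k=1$ on trees; in this paper it is only ever invoked with $k=2$, so that is a harmless caveat.

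The genuine gap is Step 2, and it is not a repairable detail but the entire content of the general case. You posit $k-1$ vertices whose deletion turns $\lambda$ into the Perron root of every surviving component while every $\lambda$-eigenfunction vanishes on them; this is precisely the Parter--Wiener phenomenon, which is a theorem about trees and fails for graphs with cycles. For $C_n$ with $\lambda=2\cos(2\pi/n)$ one has $k=2$ and $m=2$, yet deleting any single vertex leaves a path with simple spectrum, and no vertex annihilates the whole two-dimensional eigenspace (the sine and cosine eigenvectors never vanish simultaneously), so the construction on which both your lower bound (sign-definiteness on each component, hence at least $m$ strong nodal domains) and your upper bound rest is simply unavailable. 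Second, the bound you attribute to Davies--Gladwell--Leydold--Stadler and Lin--Lippner--Mangoubi--Yau --- that a $\lambda_k$-eigenfunction has at most $(k-1)\Delta_G+\ell_G$ strong nodal domains --- is not their theorem: DGLS gives $S(f)\le k+r-1$ with $r$ the multiplicity (circular if the goal is to bound $r$), and LLMY gives $S(f)\le W(f)+\ell_G$ with $W(f)\le k$; the factor $\Delta_G$ in your claimed bound is manufactured only by the same unproven vertex-deletion structure. Finally, Step 1 is never used by Step 2, so even granting both halves they do not assemble into a proof of the stated inequality. The missing idea is exactly the mechanism, supplied in the cited reference, that converts a large $\lambda_k$-eigenspace of a graph with cycles into an eigenfunction with provably many strong nodal domains, with the cyclomatic number entering additively via LLMY rather than through a tree-style deletion argument.
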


\section{Proofs}\label{sec:radius-graph}
In this section, we present our new proof of Theorem \ref{thm:main2} and Theorem \ref{thm:main}.
\subsection{Multiplicity estimation for graphs with bounded degree}\label{subsec:prepare}
In this part, we provide an estimate for the multiplicity of eigenvalues for graphs that appear during the proof of Theorem \ref{thm:main}.

\begin{lemma}\label{lemma:multi}
    Let $G=(V,E)$ be a connected graph with $\lambda_2(A_G)=2$ and $\Delta_G\leq 14$. Then, we have $m_2(A_G)\leq 80$.
\end{lemma}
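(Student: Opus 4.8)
The plan is to combine Lemma~\ref{lemma:multi tree} with Lemma~\ref{lemma:edgedisjoint} in order to control the cyclomatic number $\ell_G$ of the relevant part of $G$. Lemma~\ref{lemma:multi tree} with $k=2$ gives $m_{\lambda_2(A_G)}(A_G)=m_2(A_G)\le \Delta_G+\ell_G\le 14+\ell_G$, so it suffices to show $\ell_G\le 66$. To bound $\ell_G$, first observe that an eigenvalue of multiplicity $m_2(A_G)$ with value $2=\lambda_2(A_G)$ forces $G$ to be "large" and highly connected only within a structured core; the key point is that too many independent cycles would create two edge-disjoint subsets each inducing a subgraph with spectral radius $\ge 2$, and by Lemma~\ref{lemma:edgedisjoint} (applied to the connected graph $G$) this is severely restricted: at most one of the two induced subgraphs can have $\lambda_1<\lambda_2(A_G)=2$, which pins down the global structure.

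Concretely, I would argue as follows. Since $\lambda_2(A_G)=2$, for any partition of $V$ into two edge-disjoint sets $V_1,V_2$, Lemma~\ref{lemma:edgedisjoint} says that either one side induces a subgraph with spectral radius strictly below $2$, or both sides have spectral radius exactly $2$. Recall that the connected graphs with spectral radius exactly $2$ are precisely the extended Dynkin diagrams $\widetilde{A}_n,\widetilde{D}_n,\widetilde{E}_6,\widetilde{E}_7,\widetilde{E}_8$ (cycles, and a short list of trees with one extra structure), while connected graphs with spectral radius $<2$ are induced subgraphs of these, i.e.\ paths and the $D_n, E_6,E_7,E_8$ type trees. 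Using the degree bound $\Delta_G\le 14$ together with this classification, I would show that $G$ can contain only a bounded number of vertex-disjoint (hence edge-disjoint) cycles or $\widetilde D/\widetilde E$-type pieces before Lemma~\ref{lemma:edgedisjoint} is violated — each such piece would be a candidate $V_1$ with $\lambda_1(A_{G[V_1]})\ge 2$, and one cannot have two edge-disjoint such pieces both with spectral radius $>2$, nor the delicate boundary cases indefinitely. This forces $\ell_G$ to be at most a small explicit constant. Tracking the constants (the number of independent cycles, times the size blow-up allowed by $\Delta_G\le 14$) should yield $\ell_G\le 66$, and hence $m_2(A_G)\le 14+66=80$.

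I expect the main obstacle to be the bookkeeping in the second step: turning the qualitative statement "$G$ has few independent cycles" into the sharp numerical bound $\ell_G\le 66$. This requires carefully using Lemma~\ref{lemma:edgedisjoint} in the contrapositive — identifying, whenever $\ell_G$ is large, an explicit pair of edge-disjoint subsets each of spectral radius strictly larger than $2$ (ruling out even the equality case $\lambda_1(A_{G[V_1]})=\lambda_1(A_{G[V_2]})=2$) — and then optimizing the constant against the degree bound $\Delta_G\le 14$. The interplay between "how big a cycle or near-cycle must be" and "how the degree-$14$ constraint limits how tightly such structures can be packed" is where the value $80$ (equivalently $\ell_G\le 66$) gets its precise shape, and I would want to double-check that bound against the extremal configuration (presumably related to a $276$-line or root-system-type example) to make sure it is not off by a small additive amount.
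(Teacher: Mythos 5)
Your proposal takes a genuinely different route from the paper, but it has a fatal gap: the intermediate claim $\ell_G\le 66$ is false. The Hoffman--Singleton graph is a connected graph with $\lambda_2=2$, maximum degree $7\le 14$, and $m_2=28\le 80$ (so it satisfies both the hypotheses and the conclusion of Lemma~\ref{lemma:multi}), yet it has $50$ vertices and $175$ edges, hence $\ell_G=126>66$. So the inequality $m_2(A_G)\le \Delta_G+\ell_G$ from Lemma~\ref{lemma:multi tree} cannot be converted into the bound $80$ by bounding $\ell_G$; even granting your structure theory, it would only give $m_2\le 14+126=140$ on this example. The underlying reason your mechanism fails is that Lemma~\ref{lemma:edgedisjoint} only constrains \emph{pairs} of edge-disjoint (i.e.\ vertex-disjoint and mutually non-adjacent) subsets: it forbids two ``separated'' pieces of spectral radius $>2$, but it says nothing about a single dense, highly connected region, which can carry arbitrarily many independent cycles without ever producing two non-adjacent subsets of large spectral radius. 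Relatedly, the ADE classification you invoke only describes the connected graphs of spectral radius $\le 2$; the one ``heavy'' chunk of $G$ (where $\lambda_1>2$ lives) is not an extended Dynkin diagram and is not controlled by that classification, so the bookkeeping step you flag as the main obstacle cannot be completed.

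For contrast, the paper avoids bounding $\ell_G$ altogether in the main case. If $\Delta_G\ge 4$, then $G$ contains the $6$-vertex tree $\mathcal{T}(1,1,1,2)$, whose spectral radius exceeds $2$; by Lemma~\ref{lemma:edgedisjoint}, everything outside the closed neighbourhood $\overline{N}_G(V_0)$ of its vertex set $V_0$ induces a subgraph of spectral radius $<2$, and $|\overline{N}_G(V_0)|\le 80$ because $\Delta_G\le 14$. Cauchy interlacing then bounds $m_2(A_G)$ by the number of deleted vertices, i.e.\ by $80$. Lemma~\ref{lemma:multi tree} is used only in the residual case $\Delta_G\le 3$ with $\ell_G\le 1$; when $\Delta_G\le 3$ and $\ell_G\ge 2$ the paper again exhibits a small subgraph of spectral radius $>2$ (a short cycle plus a pendant vertex, or $\mathcal{T}(2,2,2)$ plus a pendant vertex) and repeats the deletion-plus-interlacing argument. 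If you want to salvage your write-up, the fix is to replace ``bound $\ell_G$ globally'' by ``delete the closed neighbourhood of one small witness of spectral radius $>2$ and interlace,'' which is exactly where the constant $80=6\cdot 14-10+6$ comes from.
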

\begin{proof}
The lemma is straightforward when $|V|\leq 80$.
From here on, we will assume $|V|\geq 81$. Our proof proceeds by examining two separate cases.

Case 1: $\Delta_G\geq 4$.

In this case, the graph $\mathcal{T}(1,1,1,2)$ depicted in Figure \ref{fig} (a) is a subgraph of $G$. It is straightforward to check $\lambda_1\left(A_{\mathcal{T}(1,1,1,2)}\right)>2$. Let $V_0$ be the vertex set of $\mathcal{T}(1,1,1,2)$. Recall that $\overline{N}_G(V_0)=N_G(V_0)\cup V_0$.
Since $\Delta_G\leq 14$, we obtain $|\overline{N}_G(V_0)|\leq 80$. By Lemma \ref{lemma:edgedisjoint}, we have $\lambda_2\left({A_{G[V\setminus \overline{N}(V_0)]}}\right)<2$. Then, the Cauchy Interlacing Theorem implies that $m_2(A_{G})\leq 80$.
\begin{figure}[h!]
\centering
\begin{minipage}{0.45\textwidth}
\centering
\begin{tikzpicture}[scale=1]
\tikzstyle{vertex}=[circle,fill=black,inner sep=2pt]
\node[vertex] (c) at (0,0) {};
\node[vertex] (u) at (0,1.5) {};
\node[vertex] (d) at (0,-1.5) {};
\node[vertex] (l) at (-1.5,0) {};
\node[vertex] (r1) at (1.5,0) {};
\node[vertex] (r2) at (3,0) {};
\draw (c) -- (u);
\draw (c) -- (d);
\draw (c) -- (l);
\draw (c) -- (r1) -- (r2);
\end{tikzpicture}
\caption*{(a) The graph $\mathcal{T}(1,1,1,2)$}
\end{minipage}
\begin{minipage}{0.45\textwidth}
\centering
\begin{tikzpicture}[scale=1]
\tikzstyle{vertex}=[circle,fill=black,inner sep=2pt]
\node[vertex] (c) at (0,0) {};
\node[vertex] (u1) at (1.5,1.5) {};
\node[vertex] (m1) at (1.5,0) {};
\node[vertex] (d1) at (1.5,-1.5) {};
\node[vertex] (u2) at (3,1.5) {};
\node[vertex] (m2) at (3,0) {};
\node[vertex] (d2) at (3,-1.5) {};
\draw (c) -- (u1) -- (u2);
\draw (c) -- (m1) -- (m2);
\draw (c) -- (d1) -- (d2);
\end{tikzpicture}
\caption*{(b) The graph $\mathcal{T}(2,2,2)$}
\end{minipage}
\caption{Illustration of two graphs}
	\label{fig}
\end{figure}

Case 2: $\Delta_G\leq 3$. 

If the cyclomatic number $\ell_G$ of $G$ is at most $1$, then we have $m_{2}(A_G)\leq 4$ by Lemma \ref{lemma:multi tree}. In the following, we assume $\ell_G\geq 2$. 

If the girth $\mathfrak{g}_G$ of $G$ is at most $5$, then there is a cycle $C_1$ in $G$ of length no larger than $5$. Let $u$ be a vertex in $V\setminus C_1$ adjacent to a vertex $v$ of $C_1$. Denote by $C_2:=C_1\cup \{u\}$. By definition, we have $\lambda_1\left(A_{G[C_2]}\right)>2$. Following Lemma \ref{lemma:edgedisjoint}, we obtain $\lambda_1\left(A_{G[V
\setminus\overline{N}( C_2)]}\right)<2$. Since $\Delta_G\leq 3$, we have $\overline{N}_G(C_2)\leq 12$. By the Cauchy Interlacing Theorem, we derive $m_{2}(A_G)\leq 12$. 

Finally, we consider the case $\mathfrak{g}_G> 5$. Since $\ell_G\geq 2$, the graph $\mathcal{T}(2,2,2)$ depicted in Figure \ref{fig} (b) is a subgraph of $G$. Denote by $V_1$ the vertex set of $\mathcal{T}(2,2,2)$. Pick a vertex $u_1$ in $V\setminus V_1$ such that there is a vertex $v_1$ in $V_1$ adjacent to $u_1$. Define $V_1'=V_1\cup \{u_1\}$. By definition, we have $\lambda_1\left(A_{G[V_1']}\right)>2$. Hence, we have $\lambda_1\left(A_{G[V
\setminus \overline{N}(V_1')]}\right)<2$ by Lemma \ref{lemma:edgedisjoint}. Since $\Delta_G\leq 3$, we deduce that $|\overline{N}_G(V_1)|\leq 18$. By the Cauchy Interlacing Theorem, we have $m_{2}(A_G)\leq 18$. 

This completes the proof of this lemma.
\end{proof}

In Lemma \ref{lemma:multi}, we assume that the maximum degree of the graph does not exceed $14$. We then prove that, for an $n$-spherical $\{-\frac{1}{5},\frac{1}{5}\}$-code satisfying specific requirements on $M_{\mathscr{C}}$ and $n$, the associated graph necessarily fulfills this degree bound.
\begin{lemma}\label{lemma:maximum-degree}
    Let $\mathscr{C}$ be an $n$-spherical $\{-\frac{1}{5},\frac{1}{5}\}$-code with $n\geq 276$. Assume that $\lambda_1(M_{\mathscr{C}})>12$ and there exists an eigenfunction of $M_{\mathscr{C}}$ corresponding to $\lambda_1(M_{\mathscr{C}})$ without negative coordinates. Then, the graph $G$ associated with $\mathscr{C}$ satisfies $$\Delta_G\leq 14.$$
\end{lemma}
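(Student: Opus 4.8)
The plan is to bound $\Delta_G$ by contradiction: suppose some vertex $v$ has $\deg_G(v)\geq 15$, and extract from the neighbourhood structure of $v$ a contradiction with the hypotheses $\lambda_1(M_{\mathscr{C}})>12$ and the existence of a nonnegative Perron-type eigenfunction. The first step is to translate the hypotheses into spectral information about $G$. Since $M_{\mathscr{C}}=2\alpha\left(\frac{1-\alpha}{2\alpha}I+\frac12 J - A_G\right)$ with $\alpha=\frac15$, we have $M_{\mathscr{C}} = \frac{2}{5}\left(2I+\frac12 J - A_G\right)$, so the eigenvalues of $M_{\mathscr{C}}$ are affine images of those of $A_G - \frac12 J$; in particular the condition $\lambda_1(M_{\mathscr{C}})>12$ becomes a lower bound forcing the top eigenvalue of $A_G-\frac12 J$ to be sufficiently negative, equivalently an upper bound on some relevant eigenvalue of $A_G$ (after accounting for the rank-one perturbation $\frac12 J$). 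I would make this precise and observe that the nonnegative eigenfunction hypothesis lets me compare $M_{\mathscr{C}}$ favourably against $A_G$ restricted to a vertex and its neighbours.

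The heart of the argument should be a local estimate. If $\deg_G(v)=D\geq 15$, consider the induced star-like configuration on $\{v\}\cup N_G(v)$; among the $D$ neighbours of $v$ in $G$ (i.e. vectors at inner product $-\frac15$ with $v$), the Gram submatrix $M'$ indexed by $\{v\}\cup N_G(v)$ has the form $\frac{2}{5}\left(2I + \frac12 J_{D+1} - A'\right)$ where $A'$ is the adjacency matrix of $G$ on this vertex set. I would look for a vector $x$ supported on these $D+1$ coordinates, respecting the sign pattern of the given nonnegative eigenfunction, for which the Rayleigh quotient $x^\top M' x / x^\top x$ is forced to be large — larger than what $\lambda_1(M_{\mathscr{C}})$ being bounded (together with interlacing) would allow. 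Concretely, since $M_{\mathscr{C}}$ is positive semidefinite with the known rank bound, the relevant quantity to contradict is a relation between $\lambda_1(M_{\mathscr{C}})$, $n$, and $D$; Balla's Theorem 2.1 (matrix projection) is exactly the tool that, applied to the projection onto the span of $v$ and structure around it, yields an inequality of the shape $\lambda_1(M_{\mathscr{C}})\big(\text{something involving } D\big)\le \big(\text{something involving } n,\alpha\big)$. Plugging in $\alpha=\frac15$, $n\geq 276$, and $\lambda_1(M_{\mathscr{C}})>12$, I expect a clean numerical contradiction once $D\ge 15$, with the nonnegative-eigenfunction hypothesis used to guarantee that the projection/Rayleigh bound goes in the favourable direction (this is presumably where the refinement over Balla's general bound enters, as flagged in Remark \ref{re:Delta}).

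The main obstacle I anticipate is getting the constants to work out sharply: Balla's general projection bound gives a maximum-degree estimate for arbitrary $\alpha$, but here we need precisely $\Delta_G\le 14$ rather than some larger value, so the argument must exploit $\alpha=\frac15$ and the specific thresholds $n\ge 276$, $\lambda_1>12$ rather than treating them as black boxes. In particular I would need to be careful about how the all-ones matrix $\frac12 J$ interacts with the local submatrix — the term $\frac12 J_{D+1}$ contributes positively and must be handled so that it does not swamp the estimate — and about whether the nonnegative eigenfunction hypothesis is genuinely needed to control the sign of cross terms or only to pin down which eigenvalue of $A_G$ corresponds to $\lambda_1(M_{\mathscr{C}})$. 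The secondary bookkeeping obstacle is the rank-one shift: passing between spectra of $A_G$ and of $A_G-\frac12 J$ via Weyl/interlacing must be done with the right indices so that "$\lambda_2(A_G)=2$" in Lemma \ref{lemma:multi} matches up with the $M_{\mathscr{C}}$-picture; I would set up this dictionary carefully at the outset so the degree bound and the multiplicity bound can later be combined.
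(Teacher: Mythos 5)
Your proposal correctly identifies Balla's Theorem \ref{thm:projection} as the main tool, but the mechanism you sketch does not actually close, and the two concrete steps that make the paper's proof work are missing. First, the local route you propose --- take the Gram submatrix $M'$ on $\{v\}\cup N_G(v)$ and derive a contradiction ``via interlacing'' with $\lambda_1(M_{\mathscr{C}})$ --- cannot work as stated: interlacing only gives $\lambda_1(M')\le \lambda_1(M_{\mathscr{C}})$, and the hypothesis $\lambda_1(M_{\mathscr{C}})>12$ is a \emph{lower} bound with no a priori upper bound, so there is nothing to contradict. (This is also why the paper must handle arbitrarily large $\lambda$, splitting into $\lambda<40$ and $\lambda\ge 40$.) Second, the actual argument is not a local Rayleigh-quotient estimate at all. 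It has two parts: (i) Theorem \ref{thm:projection} applied to the nonnegative unit Perron eigenfunction $x$ gives a \emph{uniform pointwise lower bound} $x(u)\ge Q(\lambda,n)$ for every vertex $u$, where $Q(\lambda,n)=\bigl((1-\tfrac{12}{\lambda})/(\tfrac{25\lambda^2}{n+24}-12)\bigr)^{1/2}$; (ii) the global eigenvector equation, rewritten via $5M_{\mathscr{C}}-4I=J-2A_G$ and evaluated at a single vertex $u$, gives
\begin{equation*}
(5\lambda-4)x(u)=\langle \mathbf{1},x\rangle-2\sum_{w\in N_G(u)}x(w)\le \sqrt{n}-2\,d_G(u)\,Q(\lambda,n),
\end{equation*}
where nonnegativity of $x$ is used exactly to lower-bound each $x(w)$ by $Q(\lambda,n)$ and Cauchy--Schwarz bounds $\langle\mathbf{1},x\rangle$ by $\sqrt{n}$. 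Rearranging yields $d_G(u)\le \frac{\sqrt{n}}{2Q(\lambda,n)}-\frac{5\lambda}{2}+2$, and the constant $14$ then comes out of a genuine optimization: one maximizes the relevant expression over $n\ge 276$ (the function $f(t)=\frac{25t\lambda^2}{t+24}-12t$ is unimodal with peak at $t=5\sqrt{2}\lambda-24$) and checks two ranges of $\lambda$ separately, reducing to the cubic inequality $\lambda^3-30\lambda^2+504\lambda-3456\ge 0$ for $12<\lambda<40$ and a quadratic inequality for $\lambda\ge 40$. Your writeup flags ``getting the constants to work out'' as an anticipated obstacle but supplies neither the pointwise eigenvector bound nor this optimization, and these are precisely where the refinement from Balla's generic $\Delta_G<17$ down to $\Delta_G\le 14$ lives; as it stands the proposal is a statement of intent rather than a proof.
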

\begin{remark}\label{re:Delta}
    For general $\alpha\in (0,1)$, Balla \cite[Lemma 3.8]{balla2021equiangular} proved the following results: For any $n$-spherical $\{-\alpha,\alpha\}$-code $\mathscr{C}$ with $n\geq \frac{(1-\alpha^2)(1-2\alpha^2)}{2\alpha^4}$, if $\lambda_1(M_{\mathscr{C}})>\frac{1-\alpha^2}{2\alpha^2}$ and there exists an eigenfunction of $M_{\mathscr{C}}$ corresponding to $\lambda_1(M_{\mathscr{C}})$ without negative coordinates, then the maximum degree $\Delta_G$ of the graph $G$ associated with $\mathscr{C}$ satisfies $\Delta_G<\frac{1+3\alpha^2-4\alpha^3}{8\alpha^3}$. In the cases $\alpha=\frac{1}{3}$ and $\alpha=\frac{1}{5}$, Balla's result implies $\Delta_G<4$ and $\Delta_G<17$. Our Lemma \ref{lemma:maximum-degree} strengthens Balla's bound in the case $\alpha=\frac{1}{5}$. To prove Theorem \ref{thm:main}, this strengthening is necessary. However, to prove Theorem \ref{thm:main2}, Balla's bound is enough.
\end{remark}
The primary tool for proving Lemma \ref{lemma:maximum-degree} is the following theorem of Balla \cite{balla2021equiangular}, derived from the method of matrix projections.
\begin{theorem}{{\cite[Theorem 2.1]{balla2021equiangular}}}\label{thm:projection}
    Let $\alpha\in (0,1)$ and $\mathscr{C}$ be an $n$-spherical $\{-\alpha,\alpha\}$-code in $\mathbb{R}^d$. If $x$ is a unit eigenfunction of $M_{\mathscr{C}}$ corresponding to an eigenvalue $\lambda$, then we have $$\lambda\left(\frac{\lambda^2}{\alpha^2n+1-\alpha^2}-\frac{1-\alpha^2}{2\alpha^2}\right)x(u)^2\geq \lambda-\frac{1-\alpha^2}{2\alpha^2},$$
    for any $u\in \mathscr{C}.$
\end{theorem}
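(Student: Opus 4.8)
The plan is to prove the inequality by the method of matrix projections, applied not to a global rank-one matrix but to a rank-two matrix that is ``anchored'' at the vertex $u$. Write $\mathscr{C}=\{v_1,\dots,v_n\}$, let $M=M_{\mathscr{C}}$ be the Gram matrix, let $x=(x_1,\dots,x_n)$ be the given unit eigenfunction with $Mx=\lambda x$ (so $x_u=x(u)$), and set $K:=\alpha^2 n+1-\alpha^2$ and $\beta:=\frac{1-\alpha^2}{2\alpha^2}$. Since $M$ is a genuine Gram matrix it is positive semidefinite, so $\lambda\ge 0$; the case $\lambda=0$ forces $x$ into the kernel and makes the asserted inequality read $0\ge-\beta$, which is trivially true, so I may assume $\lambda>0$.

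The starting observation is that the rank-one matrices $W_i:=v_iv_i^{\top}$ have a completely explicit Gram matrix in the Frobenius inner product: because $\langle W_i,W_j\rangle=\langle v_i,v_j\rangle^2$ equals $\alpha^2$ for $i\neq j$ and $1$ for $i=j$, the Hadamard square $M\circ M$ yields
\[
G:=\big(\langle W_i,W_j\rangle\big)_{i,j}=(1-\alpha^2)I+\alpha^2 J,
\]
which is positive definite with explicit inverse $G^{-1}=\frac{1}{1-\alpha^2}\big(I-\frac{\alpha^2}{K}J\big)$. For any symmetric matrix $A$, the orthogonal projection of $A$ onto $\mathrm{span}\{W_1,\dots,W_n\}$ has squared Frobenius norm $b^{\top}G^{-1}b$, where $b_i:=\langle A,W_i\rangle=v_i^{\top}Av_i$, and hence the projection inequality reads $b^{\top}G^{-1}b\le\|A\|_F^2$. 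This single inequality, for a well-chosen $A$, should be the whole proof: the naive choice $A=ww^{\top}$ only produces a global bound on $\sum_i x_i^4$, so the art is to localize.

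The key creative choice --- and the step I expect to be the main obstacle --- is selecting $A$ so that the inequality localizes at $u$ and the two-distance structure collapses the computation. Set $w:=\sum_i x_iv_i\in\mathbb{R}^d$, so that $\|w\|^2=x^{\top}Mx=\lambda$ and $\langle w,v_i\rangle=(Mx)_i=\lambda x_i$. I would apply the projection inequality to the symmetric rank-two matrix
\[
A:=\tfrac12\big(wv_u^{\top}+v_uw^{\top}\big).
\]
A direct computation then gives $\|A\|_F^2=\tfrac{\lambda}{2}\big(1+\lambda x(u)^2\big)$ and $b_i=v_i^{\top}Av_i=\langle v_i,w\rangle\langle v_i,v_u\rangle=\lambda\,x_i\,M_{iu}$, i.e. $b=\lambda\,(x\circ z)$ with $z_i=M_{iu}$. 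Using $M_{iu}^2\in\{1,\alpha^2\}$ and $\sum_i x_i^2=1$ one finds $\|b\|^2=\lambda^2\big((1-\alpha^2)x(u)^2+\alpha^2\big)$ and $\mathbf{1}^{\top}b=\lambda\sum_i x_iM_{iu}=\lambda^2 x(u)$.

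Finally I would substitute these three quantities into $b^{\top}G^{-1}b\le\|A\|_F^2$ using the explicit $G^{-1}$, obtaining
\[
\lambda^2 x(u)^2+\frac{\lambda^2\alpha^2}{1-\alpha^2}-\frac{\alpha^2\lambda^4 x(u)^2}{(1-\alpha^2)K}\le\frac{\lambda}{2}\big(1+\lambda x(u)^2\big).
\]
Dividing by $\lambda$, clearing the factor $2$, and using $\frac{2\alpha^2}{1-\alpha^2}=\beta^{-1}$, this reduces to $\lambda x(u)^2\big(1-\frac{\lambda^2}{\beta K}\big)\le 1-\frac{\lambda}{\beta}$; multiplying through by $-\beta<0$ reverses the inequality and gives exactly $\lambda\big(\frac{\lambda^2}{K}-\beta\big)x(u)^2\ge\lambda-\beta$, as claimed. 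The entire content of the argument is thus the choice of the anchored matrix $A$ together with the identity $M\circ M=G$; the remaining work is bookkeeping that I expect to collapse cleanly, with no appeal to the ambient dimension $d$ and with no assumption on which eigenvalue $\lambda$ is.
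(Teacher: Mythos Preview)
The paper does not prove this theorem: it is quoted verbatim as \cite[Theorem~2.1]{balla2021equiangular} and used as a black box, with the only hint being the sentence ``derived from the method of matrix projections.'' There is therefore no in-paper proof to compare against.

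That said, your argument is correct and is precisely an instance of the matrix-projection method the paper alludes to. The identification $\langle W_i,W_j\rangle=\langle v_i,v_j\rangle^2$ giving $G=(1-\alpha^2)I+\alpha^2J$, the explicit inverse, and the projection inequality $b^{\top}G^{-1}b\le\|A\|_F^2$ are all standard and verified correctly. The choice $A=\tfrac12(wv_u^{\top}+v_uw^{\top})$ with $w=\sum_i x_iv_i$ is exactly the localization trick needed, and your computations of $\|A\|_F^2=\tfrac{\lambda}{2}(1+\lambda x(u)^2)$, $\|b\|^2=\lambda^2((1-\alpha^2)x(u)^2+\alpha^2)$, and $\mathbf{1}^{\top}b=\lambda^2 x(u)$ check out line by line; the final algebraic reduction to $\lambda(\lambda^2/K-\beta)x(u)^2\ge\lambda-\beta$ is clean. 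The edge case $\lambda=0$ is handled, and positive definiteness of $G$ (eigenvalues $1-\alpha^2$ and $K$) justifies the use of $G^{-1}$. So your proof stands on its own; there is simply no paper proof to set it beside.
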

\begin{remark}\label{rem:positive}
    By the above theorem, we have $\frac{\lambda^2}{\alpha^2n+1-\alpha^2}-\frac{1-\alpha^2}{2\alpha^2}>0$ whenever $\lambda>\frac{1-\alpha^2}{2\alpha^2}$.
\end{remark}

\begin{proof}[Proof of Lemma \ref{lemma:maximum-degree}]
   Denote $\lambda=\lambda_1(M_{\mathscr{C}})$. Let $x$ be a unit eigenfunction of $M_{\mathscr{C}}$ corresponding to $\lambda$ without negative coordinates.  By Theorem \ref{thm:projection}, we have $$x(u)^2\geq \frac{1-\frac{12}{\lambda}}{\frac{25\lambda^2}{n+24}-12},$$ for any $u\in \mathscr{C}$. Since $\lambda>12,$ we have $\frac{25\lambda^2}{n+24}-12>0$ by Theorem \ref{thm:projection}, see Remark \ref{rem:positive}. 
   Since all coordinates of $x$ are non-negative, we deduce \begin{equation}\label{eq:low}
       x(u)\geq \sqrt{\frac{1-\frac{12}{\lambda}}{\frac{25\lambda^2}{n+24}-12}}=:Q(\lambda,n),
   \end{equation}
   for any $u\in \mathscr{C}$. Using \eqref{eq:M_C} and \eqref{eq:low}, we compute
\begin{equation}\label{eq:upper}
\begin{aligned}
       (5\lambda-4)Q(\lambda,n)\leq & (5\lambda-4)x(u)=((5M_{\mathscr{C}}-4I)x)(u)\\
       =&((J-2A_G)x)(u)=\langle \mathbf{1},x\rangle-2\sum_{v\in N_G(u)}x(v)\leq \sqrt{n}-2d_G(u)Q(\lambda,n).
\end{aligned}
\end{equation}
Rearranging (\ref{eq:upper}) yields that $$d_G(u)\leq \frac{\sqrt{n}}{2Q(\lambda,n)}-\frac{5\lambda}{2}+2.$$
for any $u\in V$. It is enough to show for $\lambda>12$ that \begin{equation}\label{eq:12}
   \frac{\sqrt{n}}{2Q(\lambda,n)}-\frac{5\lambda}{2}=\frac{1}{2}\sqrt{\frac{f(n)}{1-\frac{12}{\lambda}}}-\frac{5\lambda}{2}\leq 12,
\end{equation}
where $f$ is a function defined via $f(t):=\frac{25t\lambda^2}{t+24}-12t$. We prove the inequality (\ref{eq:12}) by considering  two separated cases. Observe that $f$ is monotonically increasing when $0\leq  t\leq 5\sqrt{2}\lambda-24$ and monotonically decreasing when $t\geq 5\sqrt{2}\lambda-24$.

Case 1. $\lambda< 40$. We have $5\sqrt{2}\lambda-24<276\leq n.$  Hence, we have  $$f(n)\leq f(276)=23\lambda^2-3312.$$Thus, it suffices to show 
\begin{equation}\label{eq:12leq40}
    \frac{1}{2}\sqrt{\frac{23\lambda^2-3312}{1-\frac{12}{\lambda}}}-\frac{5\lambda}{2}\leq 12.
\end{equation}
Because $\lambda> 12$, we have the inequality (\ref{eq:12leq40}) is equivalent to
\begin{equation}\label{eq:12l40}
  g(\lambda):=\lambda^3-30\lambda^2+504\lambda-3456\geq 0.
\end{equation}
 Observe that $g(\cdot)$ is monotonically increasing. Then, we obtain $ \lambda^3-30\lambda^2+504\lambda-3456\geq g(12)=0$. That is, \eqref{eq:12l40} holds and hence we derive \eqref{eq:12}.

Case 2. $\lambda\geq 40$.  This implies that $$f(n)\leq f(5\sqrt{2}\lambda-24)= 25\lambda^2-120\sqrt{2}\lambda+288.$$ 
It is enough to prove
\begin{equation*}
    \frac{1}{2}\sqrt{\frac{25\lambda^2-120\sqrt{2}\lambda+288}{1-\frac{12}{\lambda}}}-\frac{5\lambda}{2}\leq 12.
\end{equation*}
Since $\lambda\geq 40$, the above inequality is equivalent to 
\begin{equation}\label{eq:12g40}
    (120\sqrt{2}-60)\lambda^2-2592\lambda-6912\geq 0.
\end{equation}
Since \eqref{eq:12g40} holds for  $\lambda\geq 40$, we obtain \eqref{eq:12}.

Combining Case 1 and Case 2, we conclude the proof of this lemma.
\end{proof}

To apply Lemma \ref{lemma:maximum-degree}, we require that the largest eigenvalue of the Gram matrix to exceed $12$ and to be associated with an eigenfunction without negative coordinates. The next two lemmas confirm that these requirements are met.
\begin{lemma}\label{lemma:nonnegative}
    Let $\{l_1,l_2,\ldots,l_n\}$ be $n$ equiangular lines in $\mathbb{R}^d$ with a common angel $\arccos(\alpha)$. Then, there exists an $n$-spherical $\{-\alpha,\alpha\}$-code $\mathscr{C}$ in $\mathbb{R}^d$, such that the Gram matrix $M_{\mathscr{C}}$ has an eigenfunction  corresponding to the largest eigenvalue, whose coordinates are all non-negative. 
\end{lemma}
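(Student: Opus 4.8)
The plan is to exploit the only freedom available when passing from the lines $l_i$ to unit vectors spanning them, namely the choice of sign of each vector. Replacing $v_i$ by $\varepsilon_i v_i$ with $\varepsilon_i\in\{-1,+1\}$ does not change the line $l_i$, and it transforms the Gram matrix into $DM_{\mathscr{C}}D$, where $D=\diag(\varepsilon_1,\dots,\varepsilon_n)$; since each off-diagonal entry merely changes sign, the new collection is again a spherical $\{-\alpha,\alpha\}$-code (this is just Seidel switching on the associated graph $G$). Hence it suffices to choose a sign pattern $D$ for which $DM_{\mathscr{C}}D$ admits a non-negative top eigenvector.

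First I would fix any spherical $\{-\alpha,\alpha\}$-code $\mathscr{C}=\{v_1,\dots,v_n\}$ spanning the $l_i$, and let $x$ be a unit eigenvector of $M_{\mathscr{C}}$ associated with the largest eigenvalue $\lambda_1(M_{\mathscr{C}})$. Define $\varepsilon_i:=\sgn(x_i)$ with the convention $\sgn(0):=1$, put $D:=\diag(\varepsilon_1,\dots,\varepsilon_n)$, and set $\mathscr{C}':=\{\varepsilon_1 v_1,\dots,\varepsilon_n v_n\}$, so that $M_{\mathscr{C}'}=DM_{\mathscr{C}}D$. The $i$-th coordinate of $Dx$ is $\varepsilon_i x_i=|x_i|\ge 0$, and using $D^2=I$ we get
\[
M_{\mathscr{C}'}(Dx)=DM_{\mathscr{C}}D\,Dx=DM_{\mathscr{C}}x=\lambda_1(M_{\mathscr{C}})\,Dx,
\]
so $Dx$ is a non-negative eigenvector of $M_{\mathscr{C}'}$ with eigenvalue $\lambda_1(M_{\mathscr{C}})$; it is nonzero since $\|Dx\|=\|x\|=1$.

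It remains only to confirm that $\lambda_1(M_{\mathscr{C}})$ is the largest eigenvalue of $M_{\mathscr{C}'}$. But $M_{\mathscr{C}'}=DM_{\mathscr{C}}D^{-1}$ is similar to $M_{\mathscr{C}}$ (indeed orthogonally congruent), hence has the same spectrum, so $\lambda_1(M_{\mathscr{C}'})=\lambda_1(M_{\mathscr{C}})$. Therefore $\mathscr{C}'$ is the code required by the lemma.

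I do not expect any serious obstacle here: the argument is a short switching/Perron-type trick, and the only points needing a word of care are the convention for the vanishing coordinates of $x$ (harmless, since $\sgn(0):=1$ does not affect $|x_i|$) and the verification that sign-switching preserves both the $\{-\alpha,\alpha\}$-code property and the spectrum. In particular, we neither need nor claim that $\lambda_1(M_{\mathscr{C}})$ is a simple eigenvalue.
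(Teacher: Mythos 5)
Your proposal is correct and follows essentially the same route as the paper: both flip the signs of the spanning vectors according to the sign pattern of a top eigenvector, observe that the Gram matrix transforms as $DM_{\mathscr{C}}D$ (so the spectrum and the $\{-\alpha,\alpha\}$-code property are preserved), and conclude that $|x|$ is a non-negative top eigenvector of the switched code. Your write-up is, if anything, slightly more explicit about the spectrum-preservation step and the $\mathrm{sign}(0)$ convention.
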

\begin{proof}
    Pick a set of unit vectors $\mathscr{C}_0=\{u_1,u_2,\ldots,u_n\}$ such that $u_i$ spans $l_i$ for $1\leq i\leq n$. Let $x$ be an eigenfunction of $M_{\mathscr{C}_0}$ corresponding to the largest eigenvalue. Denote by $I:=\{u\in \mathscr{C}:x(u)<0\}$. We define a new set of unit vectors $\mathscr{C}=\{v_1,v_2,\ldots,v_n\}$ as follows $$v_i=\begin{cases}
u_i & \text{if } u_i\in I,\\
 -u_i & \text{if } i \notin I,
 \end{cases}$$
 and a new function $y$ such that $y(u):=|x(u)|$ for any $u\in \mathscr{C}$. It is straightforward to check that $\mathscr{C}$ is an $\{-\alpha,\alpha\}$-code and $y$ is an eigenfunction of $M_{\mathscr{C}}$ corresponding to the largest eigenvalue. All coordinates of $y$ are non-negative. 
\end{proof}
\begin{lemma}\label{lemma:uniform-upper}
    For any $\alpha\in(0,1)$, let $\mathscr{C}$ be an $n$-spherical $\{-\alpha,\alpha\}$-code. If the largest eigenvalue satisfies $\lambda_1(M_{\mathscr{C}})\leq \frac{1-\alpha^2}{2\alpha^2}$, then $$n\leq \frac{(1-\alpha^2)(1-2\alpha^2)}{2\alpha^4}.$$ 
\end{lemma}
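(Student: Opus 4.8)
The plan is to combine the two standard trace identities for the Gram matrix $M_{\mathscr{C}}$ with the fact that, being a Gram matrix, $M_{\mathscr{C}}$ is positive semidefinite, and then simply solve the resulting inequality for $n$.

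Set $M=M_{\mathscr{C}}$ and let $\lambda_1\geq\lambda_2\geq\cdots\geq\lambda_n\geq 0$ be its eigenvalues; they are nonnegative because $M$ is a Gram matrix. From the definition of a $\{-\alpha,\alpha\}$-code the diagonal entries of $M$ equal $1$ and all off-diagonal entries equal $\pm\alpha$, so
$$\trace(M)=\sum_{i=1}^n\lambda_i=n,\qquad \trace(M^2)=\sum_{i,j}m_{ij}^2=\sum_{i=1}^n\lambda_i^2=n+n(n-1)\alpha^2.$$
The first thing I would establish is the lower bound $\lambda_1\geq 1+(n-1)\alpha^2$. This follows from the elementary inequality $\sum_{i=1}^n\lambda_i(\lambda_1-\lambda_i)\geq 0$, in which every summand is nonnegative since $0\leq\lambda_i\leq\lambda_1$; rearranging gives $\lambda_1\sum_i\lambda_i\geq\sum_i\lambda_i^2$, i.e. $\lambda_1\geq\trace(M^2)/\trace(M)=1+(n-1)\alpha^2$.

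It then remains to feed in the hypothesis $\lambda_1(M_{\mathscr{C}})\leq\frac{1-\alpha^2}{2\alpha^2}$ and solve for $n$. From $1+(n-1)\alpha^2\leq\frac{1-\alpha^2}{2\alpha^2}$ we get $(n-1)\alpha^2\leq\frac{1-\alpha^2}{2\alpha^2}-1=\frac{1-3\alpha^2}{2\alpha^2}$, hence $n-1\leq\frac{1-3\alpha^2}{2\alpha^4}$, and since $1-3\alpha^2+2\alpha^4=(1-\alpha^2)(1-2\alpha^2)$ this rearranges to exactly $n\leq\frac{(1-\alpha^2)(1-2\alpha^2)}{2\alpha^4}$. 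I do not expect any genuine obstacle here; the only point that needs care is the nonnegativity of the eigenvalues of $M_{\mathscr{C}}$ (used in the rearrangement inequality), which is immediate from positive semidefiniteness of the Gram matrix, cf. Lemma \ref{lemma:equivalent}.
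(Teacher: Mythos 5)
Your proof is correct and is essentially the same argument as the paper's: the paper also combines $\trace(M^2)=n(\alpha^2 n+1-\alpha^2)$ with $\sum_i\lambda_i^2\leq\lambda_1\trace(M)$ (which is exactly your rearrangement $\sum_i\lambda_i(\lambda_1-\lambda_i)\geq 0$ using nonnegativity of the eigenvalues) and then inserts the hypothesis on $\lambda_1$. The only difference is cosmetic: you isolate the intermediate bound $\lambda_1\geq 1+(n-1)\alpha^2$ before solving for $n$, whereas the paper writes the chain of inequalities in one line.
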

\begin{proof}
 This lemma follows by a direct calculation $$n(\alpha^2n+1-\alpha^2)=\mathrm{tr}(M^2_{\mathscr{C}})=\sum_{i=1}^n\lambda^2_i(M_{\mathscr{C}})\leq \lambda_1(M_{\mathscr{C}})\mathrm{tr}(M_{\mathscr{C}})\leq \frac{(1-\alpha^2)n}{2\alpha^2}.$$
 In the above, we use the fact that every $\lambda_i\geq 0$ since $M_{\mathscr{C}}$ is positive-semidefinite.
\end{proof}
\subsection{Proof of Theorem \ref{thm:main}}
 We first recall a lower bound for $N_{\frac{1}{2k-1}}(d)$.
\begin{lemma}\label{lemma:lowerbound}
   For any $d\geq 1$ and $k\geq 2$, we have $N_{\frac{1}{2k-1}}(d)\geq \left\lfloor\frac{k(d-1)}{k-1}\right\rfloor.$
\end{lemma}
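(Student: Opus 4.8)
The plan is to exhibit an explicit configuration of $\left\lfloor\frac{k(d-1)}{k-1}\right\rfloor$ equiangular lines in $\mathbb{R}^d$ with common angle $\arccos\frac{1}{2k-1}$, using the classical ``projective'' construction based on a suitable collection of $\pm 1$ vectors. First I would write $n=\left\lfloor\frac{k(d-1)}{k-1}\right\rfloor$ and observe that the target bound is the largest $n$ for which $\frac{n}{k} \le \frac{d-1}{k-1}$ still has room; concretely, set $m = \lceil n/k\rceil$ (the number of ``blocks''), so that $n \le km$ and $(k-1)m \le d-1$, i.e.\ $d \ge (k-1)m+1$. It therefore suffices to construct $km$ equiangular lines with angle $\arccos\frac{1}{2k-1}$ in dimension $(k-1)m+1$, and then discard $km-n$ of them.

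The construction I would use: take $m$ disjoint groups of $k$ indices, and in $\mathbb{R}^{km}$ consider the vectors $e_i - e_j$ for $i,j$ in the same group (this is the standard root-system style configuration). More efficient is to use vectors of the form $\frac{1}{\sqrt{2k-2}}(\,(k-1)e_i - \sum_{j \ne i, j\sim i} e_j\,)$ within each block of size $k$, which live in a $(k-1)$-dimensional subspace and are mutually equiangular. Alternatively — and this is the cleanest route — I would invoke Lemma \ref{lemma:equivalent}: it is enough to produce an $n$-vertex graph $G$ with $\frac{1-\alpha}{2\alpha}I + \frac12 J - A_G$ positive semidefinite of rank at most $d$, where $\alpha = \frac{1}{2k-1}$, so $\frac{1-\alpha}{2\alpha} = k-1$. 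Take $G$ to be the disjoint union of $m$ copies of the complete graph $K_k$ (on the first $km-$ vertices, then delete vertices down to $n$). For $G = K_k$ one computes that $(k-1)I + \frac12 J - A_{K_k} = (k-1)I + \frac12 J - (J - I) = kI - \frac12 J$, which has eigenvalue $k - \frac{k}{2} = \frac{k}{2}$ with multiplicity $1$ and eigenvalue $k$ with multiplicity $k-1$; this is positive definite of rank $k$. Hmm — that gives rank $k$ per block, total $km$, too large. The fix is to instead take $G$ = disjoint union of $m$ copies of $K_k$ but note the all-ones directions across blocks can be merged: the matrix $(k-1)I_{km} + \frac12 J_{km} - A_G$ has the $J$-term shared, so one checks its rank is $m(k-1) + 1$, matching the claimed dimension $d = (k-1)m+1$. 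Then deleting $km - n \ge 0$ vertices (which only removes rows/columns, not increasing rank) yields an $n$-vertex example in $\mathbb{R}^d$.

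Thus the key steps in order are: (i) reduce to finding a graph via Lemma \ref{lemma:equivalent} with $\frac{1-\alpha}{2\alpha} = k-1$; (ii) take $G$ to be $m$ disjoint copies of $K_k$ where $m = \lceil n/k \rceil$; (iii) verify $(k-1)I + \frac12 J - A_G$ is positive semidefinite by checking it on each eigenspace (the within-block eigenvalue $k$ with multiplicity $k-1$ per block, plus the global ones-vector contributions), and compute its rank to be exactly $(k-1)m + 1 \le d$; (iv) delete $km - n$ vertices to get down to exactly $n$ lines, noting positive semidefiniteness and the rank bound are preserved under taking principal submatrices. I expect the main obstacle to be step (iii): pinning down the rank of $(k-1)I_{km} + \frac12 J_{km} - A_G$ precisely. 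The cleanest way is to block-diagonalize using the decomposition of $\mathbb{R}^{km}$ into the $m$ ``block-constant'' directions and the $k-1$ orthogonal directions inside each block; on the latter $A_G$ acts as $-I$ so the matrix is $kI$ (full rank $m(k-1)$), while on the $m$-dimensional block-constant space $A_G$ acts as $(k-1)I$ and $J$ acts as $k$ times the block-count averaging operator, giving $(k-1)I - (k-1)I + \frac{k}{2}(\text{rank-1}) = \frac{k}{2}$(rank-one), contributing exactly $1$ to the rank. Total rank $(k-1)m + 1$, and one must double-check this equals $d$ when $n$ is at the extreme and is $\le d$ otherwise; a short case check on $n \bmod k$ handles the floor.
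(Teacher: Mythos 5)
Your overall route is the same as the paper's: reduce via Lemma \ref{lemma:equivalent} to exhibiting an $n$-vertex graph $G$ with $(k-1)I+\frac{1}{2}J-A_G$ positive semidefinite of rank at most $d$, and take $G$ to be a disjoint union of complete graphs $K_k$ (the paper uses $\lfloor(d-1)/(k-1)\rfloor$ copies of $K_k$ together with $n-k\lfloor(d-1)/(k-1)\rfloor$ isolated vertices). However, your rank bookkeeping breaks down exactly in the case $(k-1)\nmid(d-1)$, which is the only nontrivial case. Write $d-1=q(k-1)+r$ with $0\le r<k-1$; then $n=\lfloor k(d-1)/(k-1)\rfloor=kq+r$ and your $m=\lceil n/k\rceil$ equals $q+1$ when $r\ge 1$. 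Your computation that the $km$-vertex matrix has rank $(k-1)m+1$ is correct, but then $(k-1)m+1=(k-1)(q+1)+1=d+(k-1-r)>d$ for $1\le r\le k-2$, so the padded configuration does not fit in $\mathbb{R}^d$. The observation that deleting vertices ``does not increase rank'' is therefore not enough: you need the rank to strictly drop by $k-1-r$ upon deleting $k-r$ vertices, which you never establish (for a positive semidefinite Gram matrix each deleted vertex lowers the rank by at most one, so the required drop is essentially tight and genuinely needs an argument). You flag this yourself (``one must double-check this equals $d$ \dots a short case check on $n\bmod k$ handles the floor''), but the check you anticipate would reveal that the inequality fails rather than confirm it.

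The gap is fixable, and the final graph you end up with does work: after deleting $k-r$ vertices from one block you have $q$ copies of $K_k$ plus one $K_r$, and since $\lambda_1(A_{K_r})=r-1<k-1$, the kernel of $(k-1)I-A_G$ has dimension exactly $q$, whence $\mathrm{rank}\left((k-1)I+\frac{1}{2}J-A_G\right)\le (n-q)+1=(k-1)q+r+1=d$. This is precisely the accounting the paper does (with $r$ isolated vertices in place of your $K_r$, which changes nothing essential): count the nullity of $(k-1)I-A_G$ as the number of $K_k$-components, add $1$ for the rank-one perturbation $\frac{1}{2}J$, and use the identity $\lfloor k(d-1)/(k-1)\rfloor-\lfloor(d-1)/(k-1)\rfloor=d-1$. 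As written, though, your proof asserts a rank bound that is false in the non-divisible case and does not supply the argument needed to repair it after deletion.
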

\begin{proof}
    By Lemma \ref{lemma:equivalent}, it is sufficient to prove that there exists a $\left\lfloor\frac{k(d-1)}{k-1}\right\rfloor$-vertex graph $G$, for which the matrix $(k-1)I+\frac{1}{2}J-A_G$ is positive semidefinite with $\mathrm{rank}\left((k-1)I+\frac{1}{2}J-A_G\right)\leq d$. Define a $\left\lfloor\frac{k(d-1)}{k-1}\right\rfloor$-vertex graph $G$ as the disjoint union of $\left\lfloor\frac{d-1}{k-1}\right\rfloor$ copies of complete graphs $K_{k}$ and $\left\lfloor\frac{k(d-1)}{k-1}\right\rfloor-k\left\lfloor\frac{d-1}{k-1}\right\rfloor$ isolated vertices. Since $\lambda_1(K_{k})=k-1$, we see $(k-1)I-A_G$  is positive semidefinite and $\mathrm{dim\,ker}((k-1)I-A_G)=\left\lfloor\frac{d-1}{k-1}\right\rfloor$. Since $J$ is positive semidefinite with rank $1$, we obtain $(k-1)I+\frac{1}{2}J-A_G$ is positive semidefinite and $$\mathrm{rank}\left((k-1)I+\frac{1}{2}J-A_G\right)\leq \mathrm{rank}((k-1)I-A_G)+1= \left\lfloor\frac{k(d-1)}{k-1}\right\rfloor-\left\lfloor\frac{d-1}{k-1}\right\rfloor+1=d,$$
    where we used the rank-nullity theorem.
\end{proof}
\begin{lemma}\label{lemma:secondeigenvalue}
    Assume $k\geq2$ is an integer. Let $\mathscr{C}$ be an $n$-spherical $\{-\frac{1}{2k-1},\frac{1}{2k-1}\}$-code in $\mathbb{R}^d$ and $G$ be the associated graph. If $n> \left\lfloor \frac{k(d-1)}{k-1} \right\rfloor$ and $d>3k-2$, then we have $$\lambda_1(A_G)>\lambda_2(A_G)=k-1,\,\,\text{ and }\,\,n\leq d+1+m_{\lambda_2(A_G)}(A_G).$$
\end{lemma}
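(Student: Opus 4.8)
The plan is to combine the rank/positive-semidefiniteness picture from Lemma~\ref{lemma:equivalent} with the interlacing between $A_G$ and the Gram matrix. Recall from \eqref{eq:M_C} that with $\alpha=\frac{1}{2k-1}$ we have $\frac{M_{\mathscr{C}}}{2\alpha}=(k-1)I+\frac12 J-A_G$, so $M_{\mathscr{C}}$ is positive semidefinite of rank at most $d$, i.e.\ $n-d$ of its eigenvalues vanish. The first thing I would do is translate this into a statement about the eigenvalues of $B:=(k-1)I+\frac12 J-A_G$: $B\succeq 0$ and $B$ has at least $n-d$ zero eigenvalues. Since $B$ is a rank-one perturbation of $(k-1)I-A_G$, Cauchy interlacing (for additive rank-one perturbations) tells us the eigenvalues of $(k-1)I-A_G$ and of $B$ interlace; in particular $(k-1)I-A_G$ has at least $n-d-1$ zero eigenvalues, equivalently $m_{k-1}(A_G)\ge n-d-1$, which after rearranging is exactly the desired inequality $n\le d+1+m_{\lambda_2(A_G)}(A_G)$ — \emph{provided} we already know $\lambda_2(A_G)=k-1$.

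So the real content is the spectral statement $\lambda_1(A_G)>\lambda_2(A_G)=k-1$. For the upper bound $\lambda_2(A_G)\le k-1$: from $B\succeq 0$ we get $A_G\preceq (k-1)I+\frac12 J$, and since $\frac12 J$ is rank one and positive semidefinite, Weyl's inequality gives $\lambda_2(A_G)\le \lambda_2((k-1)I+\frac12 J)+\lambda_n(-\,\cdot\,)$ — more cleanly, $\lambda_2(A_G)\le \lambda_2\big((k-1)I+\frac12 J\big)=k-1$ because the second-largest eigenvalue of $(k-1)I+\frac12 J$ is $k-1$. For the lower bound $\lambda_2(A_G)\ge k-1$, and for the strictness $\lambda_1(A_G)>\lambda_2(A_G)$, I would exploit the hypothesis $n>\lfloor\frac{k(d-1)}{k-1}\rfloor$, which by Lemma~\ref{lemma:lowerbound}'s construction should force $G$ to contain ``enough'' copies of $K_k$ (or at least cliques of size $k$) that $k-1$ genuinely appears in the spectrum. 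Concretely: if $\lambda_2(A_G)<k-1$, then $B=(k-1)I+\frac12 J-A_G$ would have all but possibly one eigenvalue bounded below by a positive constant, forcing $\operatorname{rank}(B)\ge n-1$, hence $d\ge n-1$, contradicting $n>\lfloor\frac{k(d-1)}{k-1}\rfloor$ once $d>3k-2$ (here the hypothesis $d>3k-2$ is what makes $\lfloor\frac{k(d-1)}{k-1}\rfloor\ge d$, closing the gap). The strict inequality $\lambda_1(A_G)>\lambda_2(A_G)$ should follow similarly: $\lambda_1(A_G)=k-1=\lambda_2(A_G)$ would again collapse the rank of $B$ by too much, or alternatively one argues $\lambda_1(A_G)>k-1$ directly because $\langle \mathbf 1, A_G\mathbf 1\rangle=2|E|$ together with the clique structure forces the spectral radius strictly above $k-1$.

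I expect the main obstacle to be the lower bound $\lambda_2(A_G)\ge k-1$ (equivalently, ruling out $\lambda_2(A_G)<k-1$). The clean way is the rank/counting argument just sketched: suppose $\lambda_2(A_G)=k-1-\varepsilon$ for some $\varepsilon>0$; then every eigenvalue of $B$ except possibly the one coming from the $\frac12 J$ direction is at least $\varepsilon>0$, so $B$ has at most one zero eigenvalue, so $n-d\le 1$, i.e.\ $n\le d+1$; but $d>3k-2$ gives $\lfloor\frac{k(d-1)}{k-1}\rfloor=d+\lfloor\frac{d-1}{k-1}\rfloor\ge d+\lceil\frac{3k-2-1}{k-1}\rceil\ge d+3 > d+1\ge n$, contradicting $n>\lfloor\frac{k(d-1)}{k-1}\rfloor$. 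The subtlety to get right is the interaction of the rank-one term $\frac12 J$ with interlacing — one must be careful that ``all eigenvalues except one are $\ge\varepsilon$'' is the correct conclusion, which follows from $B\succeq 0$ plus $A_G+\frac12 J\succeq (k-1)I - ((k-1)-\lambda_2(A_G))\cdot(\text{rank-}1)$; writing this out via Weyl's inequalities applied to $B=\big((k-1)I-A_G\big)+\frac12 J$ and using that $(k-1)I-A_G$ has $\lambda_{n-1}\ge \varepsilon$ makes it precise. Once this lemma is in place, the remaining steps (the upper bound on $\lambda_2$, and the final counting inequality) are short interlacing arguments.
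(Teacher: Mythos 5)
Your treatment of $\lambda_2(A_G)\le k-1$ (Weyl with the rank-one term $\tfrac12 J$), of $\lambda_2(A_G)\ge k-1$ (if $\lambda_2<k-1$ then $n-1$ eigenvalues of $(k-1)I-A_G$ are positive, so $\operatorname{rank}(B)\ge n-1$ and $n\le d+1$, contradicting $n>\lfloor\frac{k(d-1)}{k-1}\rfloor\ge d+2$ for $d>3k-2$), and of the final counting inequality all work, and they match the paper's Steps~1 and~3 in substance. (Minor slip: $\lfloor\frac{k(d-1)}{k-1}\rfloor=d-1+\lfloor\frac{d-1}{k-1}\rfloor$, not $d+\lfloor\frac{d-1}{k-1}\rfloor$; the conclusion $\ge d+2$ still holds.)

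The genuine gap is the strict inequality $\lambda_1(A_G)>k-1$, which you leave as ``would again collapse the rank of $B$ by too much'' or defer to a ``clique structure'' of $G$. The latter rests on a misconception: the hypothesis $n>\lfloor\frac{k(d-1)}{k-1}\rfloor$ does not force $G$ to contain copies of $K_k$ --- the construction in Lemma~\ref{lemma:lowerbound} is one particular graph, whereas $G$ here is an arbitrary graph associated with the code. And the rank-collapse idea does not close as stated: if $\lambda_1(A_G)=k-1$ with multiplicity $m$, one only gets $\operatorname{rank}(B)\ge n-m$, hence $n\le d+m$, which is no contradiction unless you can bound $m$. The paper's Step~2 supplies the two missing ingredients. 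First, every connected component of $G$ whose spectral radius equals $k-1$ has at least $k$ vertices, so the number $l$ of such components satisfies $l\le n/k$, and $m=\dim\ker((k-1)I-A_G)=l\le n/k$. Second, by Perron--Frobenius the kernel of $(k-1)I-A_G$ contains a nonnegative, nonzero vector, which cannot lie in $\ker(J)$; since $\ker(B)=\ker((k-1)I-A_G)\cap\ker(J)$ (both summands being positive semidefinite), this forces $\dim\ker(B)\le m-1$, i.e.\ $\operatorname{rank}((k-1)I-A_G)\le d-1$. Combining, $n=\operatorname{rank}((k-1)I-A_G)+m\le d-1+n/k$, i.e.\ $n\le\frac{k(d-1)}{k-1}$, contradicting the hypothesis. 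Without the component count $m\le n/k$ and the Perron--Frobenius step, your argument for strictness does not go through.
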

\begin{proof}
We prove this lemma in $3$ steps.

    Step 1.  We prove that $k-1$ is an eigenvalue of $A_G$. Suppose $k-1$ is not an eigenvalue of $A_G$. Then we derive from  Lemma \ref{lemma:equivalent} that 
    $$d\geq \mathrm{rank}\left((k-1)I-A+\frac{1}{2}J\right)\geq \mathrm{rank}((k-1)I-A)-1\geq  n-1.$$
Since $d>3k-2$, we derive $n\leq d+1<\left\lfloor \frac{k(d-1)}{k-1}\right\rfloor$. Contradiction.

 Step 2. We prove that $k-1$ is not the largest eigenvalue of $A_G$. Suppose $k-1$ is the largest eigenvalue of $A_G$.
Denote by $\{G_i=(V_i,E_i)\}_{i=1}^t$ the connected components of $G$.
 We assume $\lambda_1(A_{G_i})=k-1$ for $1\leq i\leq l$ and  $\lambda_1(A_{G_i})<k-1$ for $l+1\leq i\leq t$. Then, we have \begin{equation}\label{eq:ngeq3l}
     n\geq \sum_{i=1}^l|V_i|\geq kl,
\end{equation}
 where we used the fact that any graph $G'=(V',E')$ with $\lambda_1(A_{G'})=k-1$ satisfies $|V'|\geq k$. Because both $(k-1)I-A_G$ and $\frac{1}{2}J$ are positive semidefinite, it follows that $$\mathrm{ker}\left((k-1)I-A+\frac{1}{2}J\right)=\mathrm{ker}((k-1)I-A)\cap\mathrm{ker}(J).$$
 By the Perron–Frobenius theorem, there exists a vector $x$ in $\mathrm{ker}((k-1)I-A)$ without negative coordinates. Because $x\notin \mathrm{ker}(J)$, we have  $$\mathrm{dim\,ker}\left((k-1)I-A+\frac{1}{2}J\right)\leq \mathrm{dim\,ker}((k-1) I-A)-1.$$
 By the rank-nullity
theorem, we deduce 
$$\mathrm{rank}((k-1) I-A)\leq\mathrm{rank}\left((k-1) I-A+\frac{1}{2}J\right)-1\leq d-1.$$
 Applying the inequality (\ref{eq:ngeq3l}), this leads to  $$n=\mathrm{rank}((k-1)I-A)+\mathrm{dim\,ker}((k-1) I-A)\leq d-1+\frac{n}{k}.$$
 That is, we have $n\leq \frac{k(d-1)}{k-1}$. Contradiction.

 Step 3. Now, we have shown that $k-1$ is an eigenvalue of $A_G$ and $\lambda_1(A_{G})>k-1$. Since $(k-1) I-A+\frac{1}{2}J$ is positive semidefinite and $\mathrm{rank}(J)=1$, we have $\lambda_2(A_{G})=k-1$.
Thus, we derive \begin{equation*}
\begin{aligned}
    n&=\mathrm{rank}((k-1)I-A_G)+\mathrm{dim\,ker}((k-1)I-A_G)\\
    &\leq\mathrm{rank}\left((k-1)I-A_G+\frac{J}{2}\right)+1+\mathrm{dim\,ker}((k-1)I-A_{G})\\
     &\leq d+1+m_{k-1}(A_G).
\end{aligned}
\end{equation*}

This concludes the proof of this lemma.
\end{proof}

Now, we prove Theorem \ref{thm:main}.
\begin{proof}[Proof of Theorem \ref{thm:main}]
  Conway \cite{Conway-69-characterisation} constructed an equiangular $276$-set in $\mathbb{R}^{23}$ with a common angle $\arccos(1/5)$. This tells particularly for $23\leq d\leq 185$ that
  $N_{\frac{1}{5}}(185)\geq N_{\frac{1}{5}}(d)\geq N_{\frac{1}{5}}(23)\geq 276.$
  Observe that $\left.\left\lfloor\frac{3d-3}{2}\right\rfloor\right\vert_{d=185}=276$. To prove Theorem \ref{thm:main}, it is enough to show $N_{\frac{1}{5}}(d)=\left\lfloor\frac{3d-3}{2}\right\rfloor$ for any $d\geq 185$. By Lemma \ref{lemma:lowerbound}, it remains to prove 
  \begin{equation}\label{eq:Aim}
      N_{\frac{1}{5}}(d)\leq  \left\lfloor \frac{3d-3}{2} \right\rfloor,\ \  \text{for any}\ d\geq 185.
  \end{equation}
  We prove this inequality by contradiction. Assume $N_{\frac{1}{5}}(d)> \left\lfloor \frac{3d-3}{2} \right\rfloor\geq 276$ for some $d\geq 185$.
    
Let $\{l_1,l_2,\ldots,l_n\}$ be $n$ equiangular lines in $\mathbb{R}^d$ with $n=N_{\frac{1}{5}}(d)$. By Lemma \ref{lemma:nonnegative}, there exists a spherical $\{-\frac{1}{5},\frac{1}{5}\}$-code $\mathscr{C}=\{v_1,v_2,\ldots,v_n\}$ in $\mathbb{R}^d$, such that the Gram matrix $M_{\mathscr{C}}$ has an eigenfunction corresponding to its largest eigenvalue without a negative coordinate. Let $G=(V,E)$ be the graph associated with $\mathscr{C}$. 
Applying Lemma \ref{lemma:uniform-upper} in the case $\alpha=\frac{1}{5}$ yields that $\lambda_1(M_{\mathscr{C}})>12$. Hence we obtain $\Delta_G\leq 14$ by Lemma \ref{lemma:maximum-degree}. Moreover, we derive $\lambda_1(A_G)>\lambda_2(A_G)=2$  by applying Lemma \ref{lemma:secondeigenvalue} in the case that $k=3$.

Denote by $\{G_i=(V_i,E_i)\}_{i=1}^t$ the connected components of $G$. There there exists at least one connected component, say $G_1$, with $\lambda_1(A_{G_1})=\lambda_1(A_G)>2$. We claim that all the other connected components do not have this property. That is, we claim that $\lambda_1(G_i)<2$ for every $2\leq i\leq t$. 

 Let $f$ (resp., $g$) be an eigenfunction of $A_{G}$ corresponding to $\lambda_1(A_G)=\lambda_1(G_1)$ (resp., $\lambda_1(A_{G_i})$ for some $2\leq i\leq t$). By the Perron--Frobenius theorem, we can assume both $f$ and $g$ have no negative coordinates, and $f$ is positive on $V_1$ and $g$ is positive on $V_i$. Take a constant $c$ such that the function $h;=f-cg$ satisfies $h^T\mathbf{1}=0$. Notice that such a $c$ is non-zero. Since $2I-A_G+\frac{1}{2}J$ is positive semidefinite, we have $h^T(2I-A_G+\frac{1}{2}J)h\geq0$, which is equivalent to $$(2-\lambda_1(A_G))f^Tf\geq c^2(\lambda_1(A_{G_i})-2)g^Tg.$$
 This implies $\lambda_1(A_{G_i})< 2$ and proves the claim.

 The claim implies particularly that $m_{2}(A_G)=m_{2}(A_{G_1})$. Then, we deduce from  Lemma \ref{lemma:secondeigenvalue} that
\begin{equation*}
    n\leq d+1+m_2(A_{G})= d+1+m_2(A_{G_1})\leq d+1+80\leq \left\lfloor\frac{3d-3}{2}\right\rfloor,
\end{equation*}
where we have applied Lemma \ref{lemma:multi} to $G_1$. Contradiction. Therefore, we prove \eqref{eq:Aim}. 
\end{proof}
\subsection{Proof of Theorem \ref{thm:main2}}
In this subsection, we prove Theorem \ref{thm:main2} using the same approach employed in the proof of Theorem \ref{thm:main}. Our first step is to derive an estimate for the multiplicity of the eigenvalues.
\begin{lemma}\label{multi2}
    Let $G=(V,E)$ be a connected graph with $\lambda_2(A_G)=1$ and $\Delta_G\leq 3$. Then we have $m_2(A_G)\leq 8$.
\end{lemma}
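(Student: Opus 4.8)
\textbf{Proof proposal for Lemma \ref{multi2}.}

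The plan is to mirror the structure of the proof of Lemma \ref{lemma:multi}, but with the threshold $\lambda_2(A_G)=1$ in place of $\lambda_2(A_G)=2$, and with the much more restrictive degree bound $\Delta_G\le 3$. As before, the statement is immediate when $|V|$ is small, so I would assume $|V|\ge 9$ and proceed by examining the structure of $G$ through its cyclomatic number and girth. The key mechanism is the combination of three ingredients already available: Lemma \ref{lemma:edgedisjoint} (edge-disjoint subsets and $\lambda_2$), Lemma \ref{lemma:multi tree} (the multiplicity bound $m_{\lambda_k}(A_G)\le (k-1)\Delta_G+\ell_G$), and the Cauchy Interlacing Theorem. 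The guiding principle is to find a small induced subgraph $H\subseteq G$ with $\lambda_1(A_H)>1$, then use Lemma \ref{lemma:edgedisjoint} to force $\lambda_2(A_{G[V\setminus\overline N_G(V(H))]})<1$, and finally bound $|\overline N_G(V(H))|$ using $\Delta_G\le 3$ to get the desired bound on $m_1(A_G)$ via interlacing. Note that a graph $H$ with $\lambda_1(A_H)>1$ is very small: already the path $P_3$ on three vertices has $\lambda_1=\sqrt2>1$, so the "forbidden seed" here is tiny, which is what makes the final constant ($8$) small.

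First I would dispose of the low-cyclomatic case: if $\ell_G\le 1$, then Lemma \ref{lemma:multi tree} with $k=2$ gives $m_1(A_G)\le \Delta_G+\ell_G\le 3+1=4\le 8$, so we may assume $\ell_G\ge 2$. (One should double-check the degenerate situation where $G$ has very few vertices and contains no $P_3$, i.e.\ $G$ is a single edge or a single vertex or a disjoint union thereof — but connectedness and $|V|\ge 9$ rule this out, and in any case $\lambda_2$ of such tiny graphs is not $1$.) For the main case $\ell_G\ge 2$, I would split on the girth $\mathfrak g_G$. If $G$ contains a cycle $C_1$ of small length — and since $\ell_G\ge2$ and $\Delta_G\le3$ we can always locate a cycle, possibly after bounding its length — pick a vertex $u$ outside $C_1$ adjacent to $C_1$, set $C_2=C_1\cup\{u\}$, observe $\lambda_1(A_{G[C_2]})>1$ (a cycle already has $\lambda_1=2>1$), apply Lemma \ref{lemma:edgedisjoint} to conclude $\lambda_1(A_{G[V\setminus\overline N_G(C_2)]})<1$, and bound $|\overline N_G(C_2)|$ by $\Delta_G\le3$. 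The arithmetic here needs to land at $\le 7$ so that interlacing yields $m_1(A_G)\le 8$: this is the delicate point, since a cycle of length $5$ together with a pendant vertex and all their neighbors can already be $5+1+(\text{up to }6)$ vertices, which is too many — so the girth split must be chosen carefully (likely $\mathfrak g_G\le 4$ versus $\mathfrak g_G\ge 5$, or even using that a short path $P_3$ already suffices as the seed rather than a whole cycle) to keep the neighborhood small enough.

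The main obstacle I anticipate is precisely this constant-chasing: with $\Delta_G\le3$ the neighborhood $\overline N_G(V(H))$ of an induced subgraph $H$ on $p$ vertices can have up to roughly $p + (\text{boundary})\cdot 2$ vertices, and to reach the target bound $8$ one must keep the total at $\le 7$. This forces one to use the smallest possible seed graph with $\lambda_1>1$. The cleanest choice is probably the path $P_3$: it has only $3$ vertices, $\lambda_1(A_{P_3})=\sqrt 2>1$, and $|\overline N_G(V(P_3))|\le 3 + 2\cdot 2 = 7$ when $\Delta_G\le 3$ (the two endpoints of $P_3$ each have at most two further neighbors, the center has none). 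Then Lemma \ref{lemma:edgedisjoint} gives $\lambda_2(A_{G[V\setminus\overline N_G(V(P_3))]})<1$ — here I'd need to verify $G$ is connected and that $V(P_3)$ and $V\setminus\overline N_G(V(P_3))$ are edge-disjoint, which holds by construction — and Cauchy interlacing yields $m_1(A_G)\le |\overline N_G(V(P_3))| \le 7 \le 8$. One still needs $G$ to actually contain a $P_3$, which follows from connectedness and $|V|\ge 9\ge 3$ together with $\Delta_G\ge 2$ (if $\Delta_G\le 1$ then $G$ is an edge or vertex and $\lambda_2\ne 1$). So the realistic plan collapses the case analysis: the only genuine case is "$G$ contains $P_3$", handled uniformly, plus the trivial small-$|V|$ and the degenerate low-degree checks. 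I would present it with the $P_3$ seed as the single clean argument, flagging the cyclomatic-number fallback via Lemma \ref{lemma:multi tree} only if the edge-disjointness hypothesis of Lemma \ref{lemma:edgedisjoint} needs $\ell_G\ge2$ to guarantee a suitable complementary set.
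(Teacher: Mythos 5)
Your proposal is correct and, once the exploratory girth/cyclomatic-number detours collapse away, it is exactly the paper's proof: take a $P_3$ (which exists since $G$ is connected with $|V|\geq 9$), note $\lambda_1(A_{P_3})=\sqrt{2}>1$, apply Lemma \ref{lemma:edgedisjoint} and Cauchy interlacing to bound the multiplicity by $|\overline{N}_G(V(P_3))|$. One small miscount: the center of $P_3$ already has degree $2$ inside $P_3$, so with $\Delta_G\leq 3$ it can have one further neighbor, giving $|\overline{N}_G(V(P_3))|\leq 3+2+2+1=8$ rather than your $7$ --- which is still exactly the bound the lemma claims.
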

\begin{proof}
    We can assume that $|V|\geq 9$. This implies that the path graph $P_3$ with $3$ vertices is a subgraph of $G$. By direct computation, we have $\lambda_1(A_{P_3})>1$. Denote by $V_3$ the vertex set of $P_3$. Because $\Delta_G\leq 3$, it follows that $\overline{N}_G(V_3)\leq 8$. By Lemma \ref{lemma:edgedisjoint}, we have $\lambda_1\left(A_{G[V\setminus\overline{N}_G(V_3)]}\right)<1$. By the Cauchy Interlacing Theorem, we deduce that $m_2(A_G)\leq \overline{N}_G(V_3)\leq 8.$
\end{proof}
Now, we prove Theorem $\ref{thm:main2}$.
\begin{proof}[Proof of Theorem \ref{thm:main2}]
In \cite{van-Seidel-elliptic-geometry,Seidel-67-Strongly-regular-graphs}, van Lint and Seidel 
constructed an equiangular $28$-set in $\mathbb{R}^{7}$ with a common angle $\arccos(1/3)$. By a similar argument as in the beginning of the proof of Theorem \ref{thm:main}, it is enough to show
$N_{\frac{1}{3}}(d)=2d-2$ for any $d\geq 15$. By Lemma \ref{lemma:lowerbound}, it remains to prove 
\begin{equation}\label{eq:Aim2}
    N_{\frac{1}{3}}(d)\leq 2d-2,\ \ \text{for any}\ d\geq 15. 
\end{equation}
We prove it by contradiction. Assume $N_{\frac{1}{3}}(d)> 2d-2\geq 28$ for some $d\geq 15$. Then, by Lemma \ref{lemma:equivalent} and Lemma \ref{lemma:nonnegative}, there exists an $n$-spherical $\{-\frac{1}{3},-\frac{1}{3}\}$-codes $\mathscr{C}$, such that the Gram matrix $M_{\mathscr{C}}$ has an eignfunction corresponding to $\lambda_1(M_{\mathscr{C}})$ without negative coordinates. By Lemma \ref{lemma:uniform-upper} and Balla \cite[Lemma 3.8]{balla2021equiangular}, see  Remark $\ref{re:Delta}$, the graph $G$ accociated with $\mathscr{C}$ satisfies $\Delta_G\leq 3$. Applying Lemma \ref{lemma:secondeigenvalue} in the case that $k=2$ yields that $\lambda_1(A_G)>\lambda_2(A_G)=1$.

Denote by $\{G_i=(V_i,E_i)\}_{i=1}^t$ the connected components of $G$. Assume $\lambda_1(A_{G_1})=\lambda(A_G)>1$. Similarly as in the proof of Theorem \ref{thm:main}, we have $\lambda_1(G_i)<1$ for any $2\leq i\leq t$. Then, we have $m_1(A_G)=m_1(A_{G_1})$. By Lemma \ref{lemma:secondeigenvalue}, we obtain
\begin{equation*}
    n\leq d+1+m_1(A_{G})= d+1+m_1(A_{G_1})\leq d+1+8\leq 2d-2,
\end{equation*}
where we have applied Lemma \ref{multi2} to $G_1$. Contradiction. This completes the proof of \eqref{eq:Aim2} 
   
\end{proof}
\section*{Acknowledgement}
This work is supported by the National Key R and D Program of China 2020YFA07
13100, the National Natural Science Foundation of China (No. 12431004). We are very grateful to Mengyue Cao and Jack H. Koolen for helpful discussions.

\bibliographystyle{plain}
\bibliography{references}

@article {lin2010nodalmulti,
    AUTHOR = {Lin, Yong and Lippner, G\'abor and Mangoubi, Dan and Yau,
              Shing-Tung},
     TITLE = {Nodal geometry of graphs on surfaces},
   JOURNAL = {Discrete Contin. Dyn. Syst.},
  FJOURNAL = {Discrete and Continuous Dynamical Systems. Series A},
    VOLUME = {28},
      YEAR = {2010},
    NUMBER = {3},
     PAGES = {1291--1298},
      ISSN = {1078-0947,1553-5231},
   MRCLASS = {39A70 (05C10 35P05 39A12)},
  MRNUMBER = {2644790},
MRREVIEWER = {Rodica\ Luca},
       DOI = {10.3934/dcds.2010.28.1291},
       URL = {https://doi.org/10.3934/dcds.2010.28.1291},
}

@article{balla2024equiangular-exponential-regime,
  title={Equiangular lines in the exponential regime},
  author={Balla, Igor and Buci{\'c}, Matija},
  journal={arXiv},
  year={2409.16219}
}

@article{ge-liu-2024equiangular,
  title={Equiangular lines via nodal domains},
  author={Ge, Chuanyuan and Liu, Shiping},
  journal={arXiv},
  year={2507.09511 }
}

@article {van-Seidel-elliptic-geometry,
    AUTHOR = {van Lint, Jacobus H and Seidel, Johan J},
     TITLE = {Equilateral point sets in elliptic geometry},
      NOTE = {Nederl. Akad. Wetensch. Proc. Ser. A {\bf 69}},
   JOURNAL = {Indag. Math.},
  FJOURNAL = {},
    VOLUME = {28},
      YEAR = {1966},
     PAGES = {335--348},
   MRCLASS = {52.50},
  MRNUMBER = {200799},
MRREVIEWER = {L.\ M.\ Blumenthal},
}

@article {Lemmens-Seidel-73,
    AUTHOR = {Lemmens, Piet WH and Seidel, Johan J},
     TITLE = {Equiangular lines},
   JOURNAL = {J. Algebra},
  FJOURNAL = {Journal of Algebra},
    VOLUME = {24},
      YEAR = {1973},
     PAGES = {494--512},
      ISSN = {0021-8693},
   MRCLASS = {05C30 (52A40)},
  MRNUMBER = {307969},
MRREVIEWER = {J.\ J.\ Burckhardt},
       DOI = {10.1016/0021-8693(73)90123-3},
       URL = {https://doi.org/10.1016/0021-8693(73)90123-3},
}

@incollection {Barg-Yu-14,
    AUTHOR = {Barg, Alexander and Yu, Wei-Hsuan},
     TITLE = {New bounds for equiangular lines},
 BOOKTITLE = {Discrete geometry and algebraic combinatorics},
    SERIES = {Contemp. Math.},
    VOLUME = {625},
     PAGES = {111--121},
 PUBLISHER = {Amer. Math. Soc., Providence, RI},
      YEAR = {2014},
      ISBN = {978-1-4704-0905-0},
   MRCLASS = {52C35 (90C22)},
  MRNUMBER = {3289408},
       DOI = {10.1090/conm/625/12494},
       URL = {https://doi.org/10.1090/conm/625/12494},
}

@article {Greaves-Koolen-Munemasa-16,
    AUTHOR = {Greaves, Gary and Koolen, Jacobus H. and Munemasa, Akihiro and
              Sz\"oll\H{o}si, Ferenc},
     TITLE = {Equiangular lines in {E}uclidean spaces},
   JOURNAL = {J. Combin. Theory Ser. A},
  FJOURNAL = {Journal of Combinatorial Theory. Series A},
    VOLUME = {138},
      YEAR = {2016},
     PAGES = {208--235},
      ISSN = {0097-3165,1096-0899},
   MRCLASS = {05C50 (52C35)},
  MRNUMBER = {3423477},
MRREVIEWER = {Alyssa\ D.\ Sankey},
       DOI = {10.1016/j.jcta.2015.09.008},
       URL = {https://doi.org/10.1016/j.jcta.2015.09.008},
}

@article {Jedwab-Wiebe-15,
    AUTHOR = {Jedwab, Jonathan and Wiebe, Amy},
     TITLE = {Large sets of complex and real equiangular lines},
   JOURNAL = {J. Combin. Theory Ser. A},
  FJOURNAL = {Journal of Combinatorial Theory. Series A},
    VOLUME = {134},
      YEAR = {2015},
     PAGES = {98--102},
      ISSN = {0097-3165,1096-0899},
   MRCLASS = {81P45},
  MRNUMBER = {3345298},
MRREVIEWER = {Sarah\ Plosker},
       DOI = {10.1016/j.jcta.2015.03.007},
       URL = {https://doi.org/10.1016/j.jcta.2015.03.007},
}

@article {Bukh-16,
    AUTHOR = {Bukh, Boris},
     TITLE = {Bounds on equiangular lines and on related spherical codes},
   JOURNAL = {SIAM J. Discrete Math.},
  FJOURNAL = {SIAM Journal on Discrete Mathematics},
    VOLUME = {30},
      YEAR = {2016},
    NUMBER = {1},
     PAGES = {549--554},
      ISSN = {0895-4801,1095-7146},
   MRCLASS = {94B25 (05D10)},
  MRNUMBER = {3477753},
MRREVIEWER = {Lyuben\ R.\ Mutafchiev},
       DOI = {10.1137/15M1036920},
       URL = {https://doi.org/10.1137/15M1036920},
}

@article {Balla-Draxler-Keevash-Sudakov-18,
    AUTHOR = {Balla, Igor and Dr\"axler, Felix and Keevash, Peter and
              Sudakov, Benny},
     TITLE = {Equiangular lines and spherical codes in {E}uclidean space},
   JOURNAL = {Invent. Math.},
  FJOURNAL = {Inventiones Mathematicae},
    VOLUME = {211},
      YEAR = {2018},
    NUMBER = {1},
     PAGES = {179--212},
      ISSN = {0020-9910,1432-1297},
   MRCLASS = {94B25 (52C10)},
  MRNUMBER = {3742757},
MRREVIEWER = {Kiyoshi\ Yoshimoto},
       DOI = {10.1007/s00222-017-0746-0},
       URL = {https://doi.org/10.1007/s00222-017-0746-0},
}

@article {Jiang-Polyanskii-20,
    AUTHOR = {Jiang, Zilin and Polyanskii, Alexandr},
     TITLE = {Forbidden subgraphs for graphs of bounded spectral radius,
              with applications to equiangular lines},
   JOURNAL = {Israel J. Math.},
  FJOURNAL = {Israel Journal of Mathematics},
    VOLUME = {236},
      YEAR = {2020},
    NUMBER = {1},
     PAGES = {393--421},
      ISSN = {0021-2172,1565-8511},
   MRCLASS = {05C50 (05C75 52C10)},
  MRNUMBER = {4093893},
MRREVIEWER = {Pavlo\ Yatsyna},
       DOI = {10.1007/s11856-020-1983-2},
       URL = {https://doi.org/10.1007/s11856-020-1983-2},
}

@article {Jiang-Tidor-Yao-Zhang-Zhao-21,
    AUTHOR = {Jiang, Zilin and Tidor, Jonathan and Yao, Yuan and Zhang,
              Shengtong and Zhao, Yufei},
     TITLE = {Equiangular lines with a fixed angle},
   JOURNAL = {Ann. of Math. (2)},
  FJOURNAL = {Annals of Mathematics. Second Series},
    VOLUME = {194},
      YEAR = {2021},
    NUMBER = {3},
     PAGES = {729--743},
      ISSN = {0003-486X,1939-8980},
   MRCLASS = {52C35 (05C50 05D10 52C30)},
  MRNUMBER = {4334975},
MRREVIEWER = {Walter\ D.\ Morris, Jr.},
       DOI = {10.4007/annals.2021.194.3.3},
       URL = {https://doi.org/10.4007/annals.2021.194.3.3},
}

@article{balla2021equiangular,
  title={Equiangular lines via matrix projection},
  author={Balla, Igor},
  journal={arXiv preprint arXiv:2110.15842},
  year={2021}
}

@article {Cao-Koolen-Lin-Yu,
    AUTHOR = {Cao, Meng-Yue and Koolen, Jack H. and Lin, Yen-Chi Roger and
              Yu, Wei-Hsuan},
     TITLE = {The {L}emmens-{S}eidel conjecture and forbidden subgraphs},
   JOURNAL = {J. Combin. Theory Ser. A},
  FJOURNAL = {Journal of Combinatorial Theory. Series A},
    VOLUME = {185},
      YEAR = {2022},
     PAGES = {Paper No. 105538, 28},
      ISSN = {0097-3165,1096-0899},
   MRCLASS = {05C50 (05C75 52C10)},
  MRNUMBER = {4316717},
       DOI = {10.1016/j.jcta.2021.105538},
       URL = {https://doi.org/10.1016/j.jcta.2021.105538},
}

@article {de-00,
    AUTHOR = {de Caen, D.},
     TITLE = {Large equiangular sets of lines in {E}uclidean space},
   JOURNAL = {Electron. J. Combin.},
  FJOURNAL = {Electronic Journal of Combinatorics},
    VOLUME = {7},
      YEAR = {2000},
     PAGES = {Research Paper 55, 3 pp.},
      ISSN = {1077-8926},
   MRCLASS = {51M15},
  MRNUMBER = {1795615},
       DOI = {10.37236/1533},
       URL = {https://doi.org/10.37236/1533},
}

@article{yu2017new,
  title={New bounds for equiangular lines and spherical two-distance sets},
  author={Yu, Wei-Hsuan},
  journal={SIAM Journal on Discrete Mathematics},
  volume={31},
  number={2},
  pages={908--917},
  year={2017},
  publisher={SIAM}
}

@article {Seidel-67-Strongly-regular-graphs,
    AUTHOR = {Seidel, Johan J},
     TITLE = {Strongly regular graphs of {$L\sb{2}$}-type and of triangular
              type},
      NOTE = {Nederl. Akad. Wetensch. Proc. Ser. A {\bf 70}},
   JOURNAL = {Indag. Math.},
  FJOURNAL = {},
    VOLUME = {29},
      YEAR = {1967},
     PAGES = {188--196},
   MRCLASS = {05.50},
  MRNUMBER = {209185},
MRREVIEWER = {F.\ Harary},
}

@article {Conway-69-characterisation,
    AUTHOR = {Conway, John H},
     TITLE = {A characterisation of {L}eech's lattice},
   JOURNAL = {Invent. Math.},
  FJOURNAL = {Inventiones Mathematicae},
    VOLUME = {7},
      YEAR = {1969},
     PAGES = {137--142},
      ISSN = {0020-9910,1432-1297},
   MRCLASS = {10.20},
  MRNUMBER = {245518},
MRREVIEWER = {G.\ K.\ White},
       DOI = {10.1007/BF01389796},
       URL = {https://doi.org/10.1007/BF01389796},
}
\end{document}